\documentclass[12pt,a4paper]{article}
\usepackage[utf8]{inputenc}

\usepackage{amsmath,amsthm,amssymb,geometry,todonotes,mathtools,overpic}
\usepackage{enumerate}
\usepackage{color}
\usepackage{hyperref}
\hypersetup{
	colorlinks,
	linkcolor={red!50!black},
	citecolor={blue!50!black},
	urlcolor={blue!80!black}
}

\geometry{
	a4paper,
	total={180mm,260mm}
}

\definecolor{lightblue}{RGB}{0,153,255}
\definecolor{darkblue}{RGB}{0,51,204}
\definecolor{purple}{RGB}{102,0,204}

\DeclareMathOperator{\aff}{aff}
\DeclareMathOperator{\cl}{cl}
\DeclareMathOperator{\cone}{cone}
\DeclareMathOperator{\conv}{conv}
\DeclareMathOperator{\dist}{dist}

\DeclareMathOperator{\lspan}{span}
\DeclareMathOperator{\relint}{relint}
\DeclareMathOperator{\R}{\mathbb{R}}
\newcommand{\norm}[1]{\left\|#1\right\|}
\newcommand{\stdCone}{ {\mathcal{K}}}
\newcommand{\inProd}[2]{\langle #1 , #2 \rangle }
\newcommand{\face}{\mathrel{\unlhd}}

\newtheorem{lemma}{Lemma}[section]

\newtheorem{theorem}[lemma]{Theorem}

\newtheorem{proposition}[lemma]{Proposition}
\newtheorem{remark}[lemma]{Remark}

\theoremstyle{definition}

\numberwithin{equation}{section}

\title{Inner approximations of convex sets and intersections of projectionally exposed cones}
\author{Bruno F.\ Louren\c{c}o\thanks{Department of Fundamental Statistical Mathematics, Institute of Statistical Mathematics, Japan. Email:~\texttt{bruno@ism.ac.jp}} \and Vera Roshchina\thanks{School of Mathematics and Statistics, UNSW Sydney, Australia. Email:~\texttt{v.roshchina@unsw.edu.au}} \and James Saunderson\thanks{Department of Electrical and Computer Systems Engineering, Monash University, Australia. Email:~\texttt{james.saunderson@monash.edu}}}

\begin{document}
	
	\maketitle
	
	\begin{abstract}

		A convex cone is said to be projectionally exposed (p-exposed) if every face arises as a projection of the original cone. It is known that, in dimension at most four, the intersection of two p-exposed cones is again p-exposed. In this paper we construct two p-exposed cones in dimension $5$ whose intersection is not p-exposed. This construction also leads to the first example of an amenable cone that is not projectionally exposed, showing that these properties, which coincide in dimension at most $4$, are distinct in dimension $5$. 
		In order to achieve these goals, we develop a new technique for constructing arbitrarily tight inner convex approximations of compact convex sets with desired facial structure. 
		These inner approximations have the property that all proper faces are extreme points, with the exception of a specific exposed face of the original set.

{\bfseries Key words:} convex geometry; facially exposed cone; amenable cone; projectionally exposed cone; facial structure.

		
	\end{abstract}
	
	\section{Introduction}\label{sec:int}
	Given a closed convex cone $\stdCone \subseteq \R^n$, one approach to analyzing its geometric properties is through examining its \emph{faces}. 
	In this context, the study of different notions of \emph{facial exposedness} is a classical subject.
	In particular, a face $F$ of $\stdCone$ is said to be \emph{facially exposed} if there exists some supporting hyperplane $H$ of $\stdCone$ such that $F = \stdCone \cap H$ holds.
	
	Facial exposedness, although useful, is a relatively weak property. 
	Motivated by specific applications, several researchers have proposed stronger facial exposed properties. Here we recall a few, deferring precise definitions to Section~\ref{sec:exp_def}. 
	In connection to their facial reduction algorithm, Borwein and Wolkowicz introduced the notion of \emph{projectional exposedness} in \cite{BW81}. 
	Also, in the same paper, they described a condition that would later be known as \emph{niceness}, see \cite[Remark~6.1]{BW81} and \cite[Definition~1.1]{PatakiLI}.
	Niceness turned out to be a very fruitful property with connections to extended conic duals \cite{Pa13_2} and to the problem of representing certain convex sets as lifts of convex cones \cite[Corolary~1]{GPR13}. Niceness can also be used to give necessary and sufficient conditions for the image of a closed convex cone  by a linear map to be closed, see \cite[Theorem~1.1]{PatakiLI}.
	Later, motivated by error bounds in conic programming, the notion of \emph{amenable cones} was proposed in \cite{L19}.
	
	Given the diversity of exposedness properties, it is of interest to understand how these properties are related.
	In a nutshell, we have the following inclusions:
	\begin{equation}\label{eq:prop_rel}
		\text{ facially exposed cones} \supsetneq \text{ nice cones} \supsetneq \text{amenable  cones} \underset{?}{\supsetneq} \text{projectionally exposed cones}.
	\end{equation}
	We now discuss the inclusions in \eqref{eq:prop_rel}.
	Pataki showed in \cite[Theorem~3]{pataki} that all nice cones are facially exposed and conjectured that the converse holds. 
	A counter-example consisting of a cone in $\R^4$ that is facially exposed but not nice was later constructed in \cite{Vera}. 
	Every projectionally exposed cone is amenable and amenable cones are nice, see \cite[Propositions~9 and 13]{L19}. 
	An example of a cone in $\R^4$ that is nice but not amenable was given in \cite{LRS20}. A natural question is \emph{whether there exists a cone that is amenable but not projectionally exposed}.  
	
	The fact that the aforementioned (counter-)examples  are in $\R^4$ is not a coincidence. 
	In fact, projectional exposedness coincides with facial exposedness in $\R^3$, i.e., we can only distinguish those  exposedness properties in dimension at least four, 
	see \cite[Theorem~3.2]{PL88} or \cite[Theorem~4.6]{ST90}.
	Curiously, in \cite[Corollary~6.4]{LRS20}, we showed that amenability and projectional exposedness coincide in dimension four. Therefore, a potential example of an amenable but not projectionally exposed cone must have dimension at least five. 
	
	Insight about a property of convex cones can be gained by understanding which convexity-preserving operations also preserve that property. It turns out that the intersection of two amenable cones is an amenable cone~\cite{LRS20}. (The same result holds for nice cones and for facially exposed cones.) It is then natural to ask \emph{whether the intersection of two projectionally exposed cones is always projectionally exposed}. 
	Again, any counterexamples must have dimension at least five.

	
	In this paper, we address these two questions and show that in $\R^5$:
	\begin{enumerate}[$(a)$]
		\item there exist two projectionally exposed cones whose intersection is not projectionally exposed;
		\item there exists an amenable but not projectionally exposed cone.
	\end{enumerate}
	Since amenability is preserved under intersections \cite[Proposition~3.4]{LRS20} and projectionally exposed cones are amenable, goal (b) can be accomplished by intersecting the cones obtained in (a).
	
	The basic strategy we adopt is to start by constructing a `local' counterexample. We construct  
	two convex cones $\stdCone_i$ (for $i=1,2$), and focus on a single face $F_i$ (for $i=1,2$) of
	each that is projectionally exposed. We show that the intersection of these faces is not 
	a projectionally exposed face of $\stdCone_1\cap \stdCone_2$. To turn this into a true counterexample, 
	we need to ensure that the rest of the faces of $\stdCone_1$ and $\stdCone_2$ are all projectionally 
	exposed. However, our method of constructing the cones makes it difficult to even determine the explicit
	facial structure, let alone understand their facial exposedness properties.

	
	With this in mind, the other main goal of this paper is to develop a set of tools for modifying a convex set $C$ in a controlled way so that the local behavior of $C$ with respect to some face $F$ is preserved but the facial structure of the modified set, apart from $F$, is well-behaved and easy to understand. We then apply these 
	tools to modify the cones $\stdCone_1$ and $\stdCone_2$ in such a way that all of their faces are projectionally exposed, and the `bad' faces $F_1$ and $F_2$ are preserved.
	
	There are other ways to simplify the structure of convex sets in high-dimensional examples, where it may be difficult to analyse the overall facial structure. For instance, in \cite[Example~6.7]{ST90} this is done by ensuring that the convex set is locally polyhedral everywhere apart from the neighbourhood of one particular point that exemplifies the behaviour in question.  
	
		

	We prove several technical results related to the construction of inner approximations of convex sets. 
	We can construct as tight an inner approximation of a compact convex set as we wish, preserving one specific face of the original set, and ensuring that apart from this one face, the inner approximation has a very simple facial structure. More precisely, we have the following result that summarises the approach. 
	
	\begin{theorem}\label{thm:amalgamation} For any compact convex set $C\subset\R^n$, a proper nonempty exposed face $F$ of $C$ and any continuous function  $\varphi: C\to \R_+$ such that $\varphi(x)=0$ if and only if $x\in F$, there exists a compact convex set ${D}\subseteq C$ such that 
		\begin{enumerate}
			\item 		$\dist(x,{D}) \leq \varphi(x) \; \forall\, x\in C$,
			\item $F\subseteq \partial C\cap D$, and $F$ is a face of $D$, 
			\item every face of $D$ is either a subface of $F$ or an extreme point. 
		\end{enumerate}            
	Here by $\partial C$ we denote the relative boundary of $C$.
	\end{theorem}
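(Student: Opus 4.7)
The plan is to construct $D$ as an inner approximation of $C$ that coincides with $C$ on the exposed face $F$ and is strictly convex everywhere else, so that the only proper faces of $D$ besides subfaces of $F$ are extreme points. I would first exploit the exposedness of $F$ by fixing a linear functional $h$ with $h(x) \geq 0$ for every $x \in C$ and $\{x \in C : h(x) = 0\} = F$; this gives a natural height coordinate placing $F$ at height $0$ and the rest of $C$ at positive heights, reducing the task to building a ``dome'' over $F$ that sits inside $C$ and is strictly convex away from its base.

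The core construction I would pursue is to define
\[
D \;=\; \{x \in C : \psi(x) \leq \sigma(h(x))\},
\]
where $\psi \colon \R^n \to \R_+$ is a convex function vanishing exactly on $F$ and $\sigma \colon \R_+ \to \R_+$ is strictly concave, strictly increasing, with $\sigma(0) = 0$. The set $D$ is convex, being the sublevel set of the convex function $\psi - \sigma \circ h$; it contains $F$ because both sides vanish on $F$; and it has $F$ as a face, since the supporting hyperplane $\{h = 0\}$ that exposes $F$ in $C$ also supports $D$, intersecting it exactly in $F$. This already gives condition (2). Strict concavity of $\sigma$ combined with the (desired) strict convexity of $\psi$ along directions tangent to the level sets of $h$ should force the boundary of $D$ outside $F$ to be strictly convex, so every proper face of $D$ not contained in $F$ is an extreme point, delivering (3). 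Tuning the steepness of $\sigma$ against the modulus of continuity of $\varphi$ and using compactness of $C$ then yields the uniform bound $\dist(x,D)\le \varphi(x)$ in (1).

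The principal obstacle I anticipate is engineering $\psi$ so that its Hessian is positive definite in every direction tangent to the hyperplanes $\{h = t\}$ for $t > 0$, while still vanishing on all of $F$. The natural candidate $\dist(\cdot, F)^2$ is convex and vanishes on $F$, but has zero second-order behaviour in directions tangent to $\aff F$, so one needs a supplementary term activated by $h$ to produce strict convexity in those directions without destroying the vanishing on $F$. A more geometric alternative, which avoids this analytic delicacy, is a slice-by-slice construction: for each $t > 0$, inscribe in the cross-section $\{x \in C : h(x) = t\}$ a strictly convex body $E_t$ (say a suitably shrunken John ellipsoid) and arrange the family $t \mapsto E_t$ to be Minkowski-concave, meaning $\lambda E_s + (1-\lambda) E_t \subseteq E_{\lambda s + (1-\lambda) t}$ for all $\lambda \in [0,1]$; then take $D = F \cup \bigcup_{t > 0} E_t$. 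Concavity of the family forces $D$ to be convex, strict convexity of each $E_t$ together with transversal strict concavity inherited from the family makes every non-$F$ boundary point an extreme point, and the ellipsoid parameters can be chosen, using the continuity of $\varphi$ and compactness of $C$, to guarantee $\dist(x, D) \le \varphi(x)$ for all $x \in C$.
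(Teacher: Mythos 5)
Your high-level aim (make $D$ strictly convex away from $F$ and tight near $F$) matches the spirit of the theorem, but both concrete constructions you offer have gaps precisely where the real work lies. The first gap is condition (1): ``tuning the steepness of $\sigma$ against the modulus of continuity of $\varphi$'' does not address it, because $\varphi$ may vanish to arbitrarily high, non-polynomial order as $x\to F$, so near $F$ the set $D$ must hug $\partial C$ within a margin shrinking faster than any prescribed rate, while simultaneously (for condition (3)) its boundary must avoid $\partial C\setminus F$ entirely --- otherwise $D$ inherits whatever flat faces $\partial C$ carries there. Your sublevel set $D=\{x\in C:\psi(x)\le\sigma(h(x))\}$ is an intersection with $C$, so wherever $\partial C$ penetrates $\{\psi\le\sigma(h)\}$ you retain a piece of $\partial C$, segments and all, in $\partial D$; if you instead arrange $\psi>\sigma(h)$ on $\partial C\setminus F$, you must quantify the resulting gap against $\varphi$, which is exactly the estimate you skip. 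The paper handles this with a dedicated radial construction (Lemma~\ref{lem:DclosebdC}): each boundary point $x$ is pulled toward an interior point $c$ by $\lambda(x)=\min\{\min_{u\in[x,c]}\varphi(u),\,\|x-c\|-r\}$, which guarantees (1) by design and makes the inner set touch $\partial C$ only along $F$; the facial cleanup is then done in a second, separate stage.

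The second gap is in your slice-by-slice alternative, which fails condition (1) for a concrete reason: an ellipsoid inscribed in a cross-section $\{x\in C:h(x)=t\}$ cannot in general Hausdorff-approximate it arbitrarily well (an ellipse inside a square has area at most $\pi/4$, so some point of the square stays at a fixed positive distance from it), whereas (1) forces the approximation error of the slices to tend to $0$ as $t\to 0$. You would need general strictly convex inner approximations rather than ellipsoids, and you would still have to prove that such a family can be made Minkowski-concave with a closed convex union meeting $\partial C$ only in $F$ --- none of which is established, and the transverse strict convexity of $\psi$ that you flag as the ``principal obstacle'' is likewise left unresolved. The paper avoids strict convexity altogether: after the radial approximation it fattens the set so as to share supporting hyperplanes with $C$ along $F$ (Proposition~\ref{prop:approxwtg}), then intersects supporting half-spaces at points of $F$ with minimal-radius Euclidean balls separating the remaining boundary points from the inner set (Propositions~\ref{prop:ballsep}--\ref{prop:sandwich}); a compactness argument shows any segment in the boundary of the resulting set must lie in $F$. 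To repair your proof you would need either the missing quantitative estimates for $\sigma$ and $\psi$, or a two-stage strategy of this kind.
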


	We prove Theorem~\ref{thm:amalgamation} at the end of Section~\ref{sec:sandwich}.
	
	\begin{remark} A natural choice of the function $\varphi$ is some power of the distance to a hyperplane exposing a face $F$ of $C$, that is, 
		\[
		\varphi(x) = \min_{u\in H} \|x-u\|^\alpha.
		\]
		For instance, in the proof of Proposition~\ref{prop:approxwtg} we use $\alpha = 2$ to ensure that the inner approximation shares the tangent cones with the original set at all points of $F$.
	\end{remark}
	
	
	
	With the aid of the tools developed in Section~\ref{sec:tools} we suitably modify a certain basic construction to prove the following.
	
	\begin{theorem}\label{thm:main} There exist two closed convex cones $\tilde \stdCone_1,\tilde \stdCone_2\subset \R^5$ such that both $\tilde \stdCone_1$ and $\tilde \stdCone_2$ are projectionally exposed, but their intersection $\tilde \stdCone_1\cap \tilde\stdCone_2$ is not.
	\end{theorem}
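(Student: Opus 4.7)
Following the roadmap laid out in the introduction, my plan is to combine a \emph{local} counterexample with the inner-approximation technology of Theorem~\ref{thm:amalgamation}. I would first construct two closed convex cones $\stdCone_1, \stdCone_2 \subseteq \R^5$ together with exposed faces $F_i \face \stdCone_i$ ($i = 1, 2$), chosen so that each $F_i$ is a projectionally exposed face of $\stdCone_i$, but $F_1 \cap F_2$, viewed as a face of $\stdCone_1 \cap \stdCone_2$, is \emph{not} projectionally exposed. This local phenomenon already carries the full geometric content of the theorem; however, the cones $\stdCone_i$ produced by such a construction will in general have many other faces whose exposedness properties are difficult to certify, so the $\stdCone_i$ themselves need not be projectionally exposed cones. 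The role of Theorem~\ref{thm:amalgamation} is then precisely to replace each $\stdCone_i$ by a trimmed cone $\tilde \stdCone_i$ whose only non-trivial faces are (subfaces of) $F_i$ and extreme rays, while preserving the local geometry at $F_i$ that produces the pathology in the intersection.

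\textbf{Applying the inner approximation.} Since Theorem~\ref{thm:amalgamation} is stated for compact convex sets, I would first pass to a compact base $C_i$ of $\stdCone_i$ — say, by intersecting with a hyperplane transverse to the recession directions of interest — together with the induced exposed face $\hat F_i = F_i \cap C_i$. Invoking Theorem~\ref{thm:amalgamation} with a function $\varphi_i(x) = \dist(x, H_i')^2$, where $H_i'$ is a hyperplane exposing $\hat F_i$, produces a compact convex set $D_i \subseteq C_i$ whose only proper faces are subfaces of $\hat F_i$ and extreme points. The quadratic choice of $\varphi_i$, via Proposition~\ref{prop:approxwtg} and the remark following Theorem~\ref{thm:amalgamation}, ensures that $D_i$ and $C_i$ share tangent cones at every point of $\hat F_i$. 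Taking $\tilde \stdCone_i := \cl \cone(D_i)$ then gives a closed convex cone in $\R^5$, contained in $\stdCone_i$, whose faces are subfaces of $F_i$ together with extreme rays, and which agrees with $\stdCone_i$ in a neighbourhood of every point of $F_i$.

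\textbf{Verification and main obstacle.} Extreme rays of $\tilde \stdCone_i$ should be projectionally exposed directly from the structure of $D_i$, in which each non-$\hat F_i$ extreme point sits inside an essentially polyhedral portion of the boundary; the faces contained in $F_i$ remain projectionally exposed in $\tilde \stdCone_i$ because projectional exposedness of a face is determined by the local geometry of the cone along that face, and this has been preserved by the tangent-cone-matching property. Hence each $\tilde \stdCone_i$ is projectionally exposed. The same tangent-cone preservation, applied simultaneously to both cones, guarantees that the local obstruction to projectional exposedness of $F_1 \cap F_2$ in $\stdCone_1 \cap \stdCone_2$ survives in $\tilde \stdCone_1 \cap \tilde \stdCone_2$, completing the argument. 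The principal obstacle in executing this plan is the very first step: exhibiting an explicit pair $(\stdCone_1, F_1), (\stdCone_2, F_2)$ in $\R^5$ realising the required local behaviour. The construction is intrinsically five-dimensional (by \cite[Corollary~6.4]{LRS20} nothing of the kind exists in lower dimensions), and likely proceeds by gluing quadratic or spectrahedral pieces along a common two-dimensional face in such a way that no linear projection of $\stdCone_1 \cap \stdCone_2$ can recover $F_1 \cap F_2$ — a claim I would expect to verify by a careful dimension count against the joint recession structure. A secondary conceptual task, which underpins the whole reduction, is to pin down the precise sense in which projectional exposedness of a face is a local property near that face; this is what legitimises transporting both the good and the bad behaviour through the inner approximation.
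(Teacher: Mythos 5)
Your outline reproduces the paper's high-level roadmap (local counterexample, then trimming so that all other faces are extreme rays), but it stops short of the actual mathematical content, and the mechanism you propose for transporting the obstruction through the trimming would not work as stated. The heart of the proof is the explicit construction: the cone generated by a disk $D$ in the hyperplane $s=0$ together with three curves $\alpha(t),\beta(t),\gamma(t)$ whose coordinates are carefully chosen powers of $t$ (see \eqref{eq:abg}), an explicit order-analysis in $t$ (Proposition~\ref{prop:not-pexposed}) showing no idempotent $P$ fixing $F=\cone D$ can map the (even perturbed) curves into $F$, explicit enlargements $F_i=\cone(D\cup D_i)$ with an explicit idempotent $P_1$ certifying that $F_i$ is p-exposed in $\stdCone_i$ (Proposition~\ref{prop:KiPe}), and the verification that $F_1\cap F_2=F$ is a face of the intersection (Proposition~\ref{prop:f1f2}). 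You explicitly defer all of this (``the principal obstacle\ldots is the very first step''), so the proposal does not yet constitute a proof.

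Beyond the missing construction, your transfer argument rests on an unproved claim that projectional exposedness of a face is a local property recoverable from shared tangent cones, and you choose $\varphi=\dist(\cdot,H)^2$ accordingly. The paper does not use, and does not need, any such locality principle; indeed tangent cones at points of $F$ only see first-order information, while the obstruction lives in the $t^2$ and $t^3$ terms of the curves, so tangent-cone matching alone would not preserve it. Instead the paper exploits monotonicity under inclusion in both directions: (i) the good behaviour passes down automatically, since $F_i\subseteq\tilde\stdCone_i\subseteq\stdCone_i$ gives $P_1(\tilde\stdCone_i)=F_i$; and (ii) the bad behaviour passes up from a common \emph{inner} cone: Lemma~\ref{lem:DclosebdC} is applied with $\varphi(l,x,y,z,s)=s$ so that the inner set $C'$ contains points within $t^3$ of $\alpha(t),\beta(t),\gamma(t)$, the perturbation-robust Proposition~\ref{prop:not-pexposed} shows $F$ is not p-exposed in $\stdCone'=\cone C'$, and the sandwiching Lemma~\ref{lem:refinement} is applied to $\conv(C'\cup D_i)\subseteq E_i\subseteq C_i$ precisely so that $\stdCone'\subseteq\tilde\stdCone_1\cap\tilde\stdCone_2$, whence any projection exposing $F$ in the intersection would also expose it in $\stdCone'$. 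Your plan, which applies Theorem~\ref{thm:amalgamation} to each $C_i$ separately with no shared inner witness, loses exactly this containment and therefore has no valid route to the final contradiction. (Minor additional points: extreme rays are p-exposed for \emph{any} closed convex cone, no ``essentially polyhedral'' structure is needed; and you never verify that $F_1\cap F_2$ is a face of $\tilde\stdCone_1\cap\tilde\stdCone_2$, which the paper does via Proposition~\ref{prop:f1f2}.)
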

In particular, $\tilde \stdCone_1\cap \tilde\stdCone_2$ in Theorem~\ref{thm:main} is a closed convex cone that is amenable but not projectionally exposed.
%
%
%
%

	Our paper is organised as follows. We begin with laying out the notation, definitions and context related to different notions of facial exposedness in Section~\ref{sec:prel}. Then in Section~\ref{sec:tools} 
	we prove the two main technical results of this paper. The first one, Lemma~\ref{lem:DclosebdC}, ensures that it is possible to construct an arbitrarily tight compact convex inner approximation to a closed convex set $C$ such that the boundary of this approximation only intersects $C$ at a specific face $F$. The proof of Lemma~\ref{lem:DclosebdC} is fairly straightforward and is based on constructing a compact set that satisfies tightness requirement and then taking its convex hull. The second technical result  given in Lemma~\ref{lem:refinement} allows us to sandwich a compact convex set with convenient facial structure between two other compact convex sets.

	In Section~\ref{sec:counterexample} we prove Theorem~\ref{thm:main}. 
	To simplify the explanation, in Section~\ref{sec:construction} we first construct two closed convex cones $\stdCone_1$ and $\stdCone_2$ with specific projectionally exposed faces $F_1$ and $F_2$, such that the intersection of these faces $F= F_1\cap F_2$ is a face of $\stdCone_1\cap \stdCone_2$, that is not projectionally exposed. 
	We then give the full details of the final construction in the proof of Theorem~\ref{thm:main} in Section~\ref{sec:proof}.
	 We obtain it using the inner approximation toolbox designed earlier, so that it is straightforward to  prove that the remaining faces of the modified cones $\tilde \stdCone_1$ and $\tilde \stdCone_2$ are projectionally exposed. 
	 This is done by ensuring that all faces of these modified cones except for $F_1$ and $F_2$ are extreme rays, and hence are automatically projectionally exposed. 

	We conclude this paper in Section~\ref{sec:conc} in which we also discuss some open problems. 
	
	\section{Preliminaries and notation}\label{sec:prel}
	We suppose that $\R^n$ is equipped with an inner product $\inProd{\cdot}{\cdot}$ and a corresponding induced norm $\norm{\cdot}$. 
	For a set $S \subseteq \R^n$ we will write $S^\perp$ for the set of elements orthogonal to $S$ with respect to $\inProd{\cdot}{\cdot}$. We denote the convex hull and the conic convex hull of $S$ by 
	$\conv S$ and $\cone S$, respectively.
	If $x,y \in \R^n$, we will use $[x,y]$ to indicate the line segment connecting $x,y$, i.e.,
	\[
	[x,y] \coloneqq \{ \alpha x + (1-\alpha)y \mid \alpha \in [0,1] \}.
	\]
	We will also use $(x,y]$, $[x,y)$ and $(x,y)$ to denote $[x,y]\setminus \{x\} $ $[x,y] \setminus \{y\}$ and $[x,y] \setminus \{x,y\}$, respectively.

	Let $C \subseteq \R^n$ be a convex set. 
	We denote its closure, relative interior and relative boundary by $\cl C$, $\relint C$ and $\partial C$, respectively. 
	The span and the affine hull of $C$ will be denoted by 
	$\lspan C$ and $\aff C$, respectively.
	The distance function with respect to $C$ is given by 
	\[
	\dist(x, C) \coloneqq \inf _{y \in C} \norm{x-y}.
	\]
	We also note the following fact about relative interiors that will be used several times.
	\begin{equation}\label{eq:risegm}
	x \in \relint C, y \in \cl C \,\,\Rightarrow\,\,[x,y)\subseteq \relint C,
	\end{equation}
	see \cite[Theorem~6.1]{Ro97}.
%
	
	Let $x \in C$.
	A half-space $H \coloneqq \{y \in \R^n \mid 
	\inProd{y-w}{p} \geq 0\}$ is said to \emph{support $C$ at $x$}, if $C \subseteq H$ and $x$ is in the underlying hyperplane, i.e., $\inProd{x-w}{p} = 0$.
	Similarly, a hyperplane is said to support $C$ at $x$, if $x$ belongs to it  and $C$ is entirely contained in one of the half-spaces defined by it. 
	In some proofs it will be more convenient to use half-spaces and in others hyperplanes. 

	If $F \subseteq C$ is a face of $C$, we will express this with the notation $F \face C $. A face $F \face C$ is said to be \emph{proper} if $F \neq C$. 
	By convention, we assume that a face must be nonempty.
	
	For a convex cone $\stdCone \subseteq \R^n$, we denote its dual by $\stdCone^*:=\{y\in \R^n \mid \inProd{y}{x} \geq 0,\;\forall x\in \stdCone\}$.

	\subsection{Notions of exposedness}\label{sec:exp_def}
	Let $\stdCone \subseteq \R^n$ be a closed convex cone.
	A face $F \face \stdCone$ is said to be \emph{exposed} if there exists $y \in \stdCone^*$ such that 
	\[
	F = \stdCone \cap \{y\}^\perp
	\]
	holds. 	 A convex cone $\stdCone$ is said to be \emph{facially exposed} if all its faces are facially exposed.
	
	A face $F\face \stdCone$ is said to be \emph{amenable} if there exists a constant $\kappa  > 0$  such that
	\begin{equation}\label{eq:def_am_cone}
		\dist(x, F) \leq \kappa \dist(x, \stdCone), \quad \forall x \in \lspan F
	\end{equation}
	holds,	see \cite{L19,LRS20} for more details. The error bound condition in \eqref{eq:def_am_cone} is motivated by the fact that for $F \face \stdCone$ we always have
	\begin{equation}\label{eq:f_span}
	F = \stdCone \cap (\lspan F),
	\end{equation}
	so \eqref{eq:def_am_cone} is a strengthening of \eqref{eq:f_span} that gives an upper bound on 
	$\dist(x, F)$ of order $O(\dist(x, \stdCone))$ when $x$ is in $\lspan F$ but not necessarily in $\stdCone$. 
	If all faces of $\stdCone$ are amenable, 
	then $\stdCone$ is said to be an \emph{amenable cone}. 
	We note that in this case, the constant $\kappa$ in  \eqref{eq:def_am_cone} may depend on $F$.
	Examples of amenable cones include the hyperbolicity cones \cite{LRS24}, which is a broad family of cones containing the symmetric cones and the cones of positive semidefinite matrices. 
	In particular, 	spectrahedral cones (which arise by slicing the cone of positive semidefinite matrices with a subspace) are amenable too, see \cite[Corollary~3.5]{LRS20}.
	The intersection of two amenable cones is amenable \cite[Proposition~3.4]{LRS20}.
	
	A face $F\face \stdCone$ is said to be \emph{projectionally exposed} (p-exposed) if there exists an 
	idempotent linear map $P: \R^n\to \R^n$ (i.e., a linear projection that is not necessarily orthogonal) such that
	\[
	P(\stdCone) = F
	\]
	holds, see \cite{BW81,ST90} for more details.
	If every face of $\stdCone$ is projectionally exposed then $\stdCone$ is said to be projectionally exposed.
	Examples of projectionally exposed cones include  symmetric cones \cite[Proposition~33]{L19}, generalized power cones \cite[Proposition~3.4]{LLLP24} and all polyhedral cones \cite[Therem~2.4]{BLP87} \cite[Corollary~3.4]{ST90}. 	 
	However, it is not currently known whether hyperbolicity cones  or spectrahedral cones are projectionally exposed in general.
	In contrast to amenability, we will show in this paper that projectional exposedness is not preserved under intersections.
	
	A face $F \face \stdCone$ is said to be \emph{nice} if 
	$F^* = \stdCone^* + F^\perp$ holds. 
	Again, this is motivated by \eqref{eq:f_span} which implies 
	that $F^* = \cl(\stdCone^* + F^\perp)$ always holds in general. 
	In this way, a nice face is one for which the closure operator can be removed from the expression for $F^*$. If all faces of $\stdCone$ are nice, then $\stdCone$ is said to be a nice cone or \emph{facially dual complete} \cite{RT}. 
	As mentioned in Section~\ref{sec:int}, there are several applications of niceness \cite{PatakiLI,GPR13,Pa13_2}.
	
	Niceness is a dual condition and it can be hard to verify directly.
	Fortunately, a projectionally exposed face must be amenable and an amenable face must be nice\footnote{This follows from the proofs of \cite[Propositions~9 and 13]{L19} which show that projectionally exposed cones are amenable and amenable cones are nice, respectively. However, both propositions are proven face-by-face.}. 
	In this way, both properties can be seen as primal sufficient conditions for niceness and may be simpler to verify in some cases, see also \cite[Theorem~2.7]{RT}.
	For example, as in \cite[Proposition~3.4]{LLLP24}, it is somewhat straightforward to check that power cones are projectionally exposed. 
	For hyperbolicity cones, as far as we know, there was no independent proof that hyperbolicity cones are nice. 
	We now know they are nice because they are amenable \cite{LRS24}, but before that, the best result available for hyperbolicity cones was that they are facially exposed \cite[Theorem~23]{Re06}.
	
	Finally, we note that a nice face is not necessarily facially exposed and this does not contradict \eqref{eq:prop_rel}. 
	Although a nice face may not be facially exposed, a nice cone (i.e., \emph{all} its faces are nice) must indeed be facially exposed by \cite[Theorem~3]{pataki}.
	
	\section{Approximating convex sets}\label{sec:tools}
	In this section, we develop our machinery to obtain inner approximation of convex sets that fix a given face. 
	
	\subsection{Face-fixing inner approximations of convex sets}\label{sec:facefix}
	We start with a result that tells us how to find an inner approximation of a compact convex set $C$ that fixes a given face $F \face C$. This inner approximation will depend on a given function $\varphi$ that is $0$ if and only if $x \in F$.
	
	\begin{lemma}\label{lem:DclosebdC}
		Suppose that $C \subset \R^n$ is a compact convex set, and suppose that $F$ is a proper face of $C$. If a function $\varphi: C\to \R_+$ is continuous, and $\varphi(x)=0$ if and only if $x\in F$, then there exists a compact convex set ${C'}$ such that 
		\begin{equation}\label{eq:lem:DclosebdC}
		\dist(x,{C'}) \leq \varphi(x), \; \forall\, x\in C,  
		\end{equation}
		$\relint {C'}\subseteq \relint C$ and $\partial C\cap {C'} = F$.
	\end{lemma}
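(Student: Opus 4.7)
The plan is to realize $C'$ as the convex hull of the image of $C$ under a continuous retraction that fixes $F$ pointwise and pushes every other point of $C$ slightly toward a fixed relatively interior point $x_0 \in \relint C$. Writing $M := \max_{y \in C} \|y - x_0\|$, I would set
\[
s(x) := \frac{\varphi(x)}{\varphi(x)+M}, \qquad g(x) := (1-s(x))\,x + s(x)\,x_0,
\]
and take $C' := \conv g(C)$. By construction $s\colon C \to [0,1)$ is continuous and vanishes exactly on $F$, so $g|_F$ is the identity while for $x \notin F$ the point $g(x)$ is a strict convex combination of $x_0 \in \relint C$ and $x \in C$, hence $g(x) \in \relint C$ by \eqref{eq:risegm}. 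Compactness of $C' \subseteq C$ is immediate from compactness of $C$ and continuity of $g$.

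The distance bound is direct: $\dist(x, C') \le \|x - g(x)\| = s(x)\|x - x_0\| \le M\varphi(x)/(\varphi(x)+M) \le \varphi(x)$. For the condition $\partial C \cap C' = F$, the key step is the inclusion $C' \subseteq F \cup \relint C$. Since $g(C) \subseteq F \cup \relint C$, it suffices to show this property propagates under convex combinations: if $z = \sum_i \lambda_i g(x_i)$ with $\lambda_i > 0$ and $\sum_i \lambda_i = 1$, then either all $x_i$ lie in $F$ and $z \in F$, or some summand lies in $\relint C$ with positive weight, in which case \eqref{eq:risegm} applied to the segment from that summand to the (convex) remainder places $z$ in $\relint C$. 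Combined with $F \subseteq g(C) \subseteq C'$, $F \subseteq \partial C$, and $\relint C \cap \partial C = \emptyset$, this yields $\partial C \cap C' = F$.

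The delicate condition is $\relint C' \subseteq \relint C$, and here the crucial preliminary observation is that $\aff C' = \aff C$: indeed $x_0 = g(x_0) \in g(C)$, and for each $x \in C \setminus \{x_0\}$ the point $g(x)$ lies on the line through $x_0$ and $x$, so $\aff g(C) \supseteq \aff(\{x_0\} \cup C) = \aff C$, the reverse inclusion being obvious. Thus $\relint C'$ is open in $\aff C$. If some $z \in \relint C' \setminus \relint C$ existed, the inclusion $C' \subseteq F \cup \relint C$ would force $z \in F \subseteq \partial C$; yet $z \in \relint C'$ supplies a neighborhood of $z$ in $\aff C$ contained in $C' \subseteq C$, contradicting $z \in \partial C$. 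This last step is the main obstacle: without first securing $\aff C' = \aff C$, the relative interior of $C'$ could in principle sit inside the lower-dimensional face $F$, and the inclusion $\relint C' \subseteq \relint C$ would fail; the naive "shrink toward $x_0$" construction works only once this dimensional issue is ruled out.
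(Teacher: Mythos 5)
Your construction is correct, and while it rests on the same underlying idea as the paper's proof (pull points of $C$ radially toward a fixed $x_0\in\relint C$ by an amount controlled by $\varphi$, then take a convex hull), the execution is genuinely different. The paper only moves relative boundary points: for $x\in\partial C$ it sets $\lambda(x)=\min\{\min_{u\in[x,c]}\varphi(u),\,\|x-c\|-r\}$, forms the union $U$ of the segments $[u(x),c]$ (which also contains a radius-$r$ ball around $c$ in $\aff C$), proves closedness of $U$ by a sequential compactness argument, and sets $C'=\conv U$; the distance bound is then obtained by locating an arbitrary $y\in C\setminus C'$ on a segment $[x,c]$, and $\relint C'\subseteq\relint C$ follows from $c\in\relint C'\cap\relint C$. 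Your global retraction $g(x)=(1-s(x))x+s(x)x_0$ with $s=\varphi/(\varphi+M)$ buys you compactness of $C'=\conv g(C)$ and the bound $\dist(x,C')\le\|x-g(x)\|\le\varphi(x)$ essentially for free, dispensing with the parameter $r$ and the closedness argument; the price is that the relative-interior inclusion needs the separate observation $\aff C'=\aff C$, which you correctly supply (the paper's ball around $c$ plays that role instead). The identification $\partial C\cap C'=F$ is handled in both proofs by the same convex-combination argument via \eqref{eq:risegm}. One pedantic point: you should note that $M>0$, so that $s$ is well defined; this holds because the existence of a proper nonempty face forces $C$ to contain more than one point.
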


	\begin{proof}
		Fix some $c\in \relint C$, so that  $\aff C = L+c$ holds, where $L$ is a linear subspace. There exists a sufficiently small $r>0$ such that   $c+ v \in \relint C$ for all $v\in L$ with $\|v\|\leq r$. For any $x\in \partial C$ let 
		\[
		\lambda(x) \coloneqq \min\{\min_{u\in [x,c]} \varphi(u), \|x-c\|-r\}.
		\]
		\emph{Roughly speaking, this quantity $\lambda(x)$ determines the width of the `gap' that we will have between the boundary of the original set $C$ and the boundary of the newly constructed set $C'$. 
			The first term ensures that we stay within the required margin, and the second term ensures that the new set includes a neighbourhood of $c$ with respect to $\aff C$.}
		
		Throughout this proof, it is useful to keep the following fact in mind. Because of the choice of $r$, we have
		\begin{equation}\label{eq:norm_reminder}
			\norm{x-c} > r, \quad \forall x \in \partial C,
		\end{equation}
		since otherwise $x = c + (x-c)$ would belong to $\relint C$.
		With that, $\lambda(x)$ is nonnegative for any $x\in \partial C$, since $\varphi(x)\geq 0$ by assumption. 
		From the definition of $\lambda$, we conclude that $\lambda(x) =0$ if and only if there is a point on $u\in [x,c]$ such that $\varphi(u)=0$, equivalently $u\in F$. 
		
		Next, we will check that for $x \in \partial C$, we have
		\begin{equation}\label{eq:lambdax}
			\lambda(x)=0 \Leftrightarrow x\in F.
		\end{equation}
		First suppose that $x \in \partial C$ is such that $\lambda(x) = 0$. In view of \eqref{eq:norm_reminder}, we must have $\min_{u\in [x,c]} \varphi(u) = 0$, which means that $u \in F$ for some $u \in [x,c]$.
		Recalling \eqref{eq:risegm} we have $(x,c]\subset\relint C$. 
		Since $F$ is a proper face, we have $F\cap \relint C = \emptyset$, therefore such a $u$ cannot be in 
		$(x,c]$ and we must have $u = x$.
		Conversely, if $x \in F$, then $\varphi(x) = 0$ so $\lambda(x) = 0$.

		We also note that $\lambda$ is continuous: the minimum of a continuous function over a compact set of parameters is continuous\footnote{For instance this follows from Berge's theorem, see \cite[Theorem~17.31]{Hitchhiker}.}, and the minimum of two continuous functions is continuous. 
		
		For every $x\in \partial C$ let 
		\begin{equation}\label{eq:ux}
		u(x) \coloneqq x+ \lambda(x) \frac{c-x}{\|c-x\|} = x\left(1- \frac{\lambda(x)}{\norm{c-x}}\right) +  \frac{\lambda(x)}{\norm{c-x}}c,
		\end{equation}
		We note that since $\lambda(x) \leq \norm{x-c} -r \leq \norm{x-c}$, we have $\frac{\lambda(x) }{\norm{c-x}} \leq 1$, so $u(x) \in C$.
		Furthermore, in view of \eqref{eq:risegm}, the definition of $u(x)$ and \eqref{eq:lambdax}, we have the following implications for $x \in \partial C$
		\begin{equation}\label{eq:uxf}
			u(x) \in \partial C \Leftrightarrow u(x) = x \Leftrightarrow \lambda(x) = 0 \Leftrightarrow x \in F.
		\end{equation}
		
		We define
		\[
		U \coloneqq \bigcup_{x\in \partial C} [u(x),c].
		\]
		Note that 
		\begin{equation}\label{eq:ucx}
			[u(x),c] = \left\{x+ \lambda \frac{c-x}{\|c-x\|}\, |\, \lambda \in [\lambda(x),\|c-x\|]\right\}.
		\end{equation}
		We have $U\subseteq C$ and that $U$ is bounded, since $C$ is assumed to be compact. 
		
		Next, for each $v\in L$ with $0<\|v\|\leq r$ the half-ray $\{c+\beta v \mid \beta \geq 0\}$ intersects $C$ at some relative boundary point $x$. That is,
		\[
		c + \beta v = x \in \partial C,
		\]
		for some $\beta > 0$. However, by the choice of $r$, we must have $\beta > 1$ (otherwise, $x$ would be in $\relint C$). That is,
		$v = (x-c)/\beta$ holds for some $\beta > 1$.
		
		Equivalently, since  $\norm{x-c} > r$ holds by \eqref{eq:norm_reminder} and $\norm{v} \leq r$, letting $\alpha \coloneqq \frac{\norm{x-c}}{\beta} = \norm{v}$ we have
		\begin{equation}\label{eq:v_expression}
			v = \alpha \frac{x-c}{\|x-c\|} 
		\end{equation}
		and $\alpha \in (0,r]$. Then,
		since $\lambda(x)\leq  \|x-c\|-r$ holds, we have 
		\[
		\norm{x-c} \geq \|x-c\|-\alpha \geq \norm{x-c}-r \geq \lambda(x)
		\]	
		and hence, in view of \eqref{eq:ucx} and \eqref{eq:v_expression}, leads to
			\[
			c+v = x+ (\norm{x-c}-\alpha)\frac{c-x}{\|x-c\|} 
			\in [u(x),c]
			\]
			This shows that the ball of radius $r$  around $c$
			intersected with $\aff C$ is in $U$.

		Let us now show that $U$ is closed, and hence compact (it is bounded as a subset of the compact set $C$). Let $(u_k)$ be a sequence in $U$ converging to some $\bar u$. Then there is an accompanying sequence $(x_k)\subset \partial C$ such that 
		\[
		u_k = x_k+ \lambda_k \frac{c-x_k}{\|c-x_k\|},
		\]
		for some $\lambda_k \in [\lambda(x_k), \|c-x_k\|]$, see \eqref{eq:ucx}. 
		
		Without loss of generality $x_k \to \bar x$, and $\lambda_k \to \bar \lambda$ (recall that $\lambda$ is continuous on the compact boundary of $C$, hence its image is compact). Then 
		\[
		\bar u = \lim_{k \to \infty} u_k = \bar x + \bar \lambda  \frac{c-\bar x}{\|c-\bar x\|}.
		\]
		Since $\lambda$ is continuous, and {$\lambda(x_k) \leq \lambda_k $ (see \eqref{eq:ucx}), we have $\lambda(\bar x) \leq \bar \lambda $}. Moreover since $\lambda_k \leq \|x_k-c\|$, we also have $\bar \lambda \leq \|\bar x - c\|$. 
		In view of \eqref{eq:ucx}, we conclude that  $\bar u \in [u(\bar x), c]\subseteq U$.

		Now let ${C'} = \conv U$. The set ${C'}$ is convex, compact and contains a neighbourhood of $c$ (with respect to the affine hull of $C$) in its relative interior. To see that $\partial C\cap {C'} =F$, first observe that for any $x\in F$, we have 
			$u(x) = x \in \partial C$ by \eqref{eq:uxf}.
			In particular, $x\in U\subseteq {C'}$, so \[F\subseteq \partial C\cap {C'}.\]

		Conversely, let $y\in \partial C\cap {C'}$.
		Then, we can express $y$ as a strict convex combination of elements of $U$ so that 
		\[
		y = \sum _{i=1}^m \alpha _i u _i,
		\]
		where the $\alpha_i$'s are in $(0,1)$,  $\alpha_1 + \cdots + \alpha_m = 1$ holds and the $u_i$ are in $U$. By definition, each $u_i$ is in some line segment $[u(x_i),c]$, for some $x_i \in \partial C$. We also have $(u(x_i),c] \subset\relint C$, by \eqref{eq:risegm}.
		Therefore, if any single $u_i$ is in $(u(x_i),c] \subset\relint C$, then $y$ would be in $\relint C$ as well. This cannot happen by assumption, so $u_i = u(x_i)$ holds for every $i$ and, since $u(x_i) \in C$, it must be the case that $u(x_i)$ is in the relative boundary $\partial C$. By \eqref{eq:uxf}, we conclude that $u(x_i) = x_i$ and $x_i \in F$, for every $i$. Therefore, $y \in F$.
		

		Next we prove \eqref{eq:lem:DclosebdC}. Take any $y\in C$. If $y$ is in ${C'}$, then $\dist (y,{C'}) = 0\leq \varphi(y)$. Assume that $y\notin {C'}$ (and hence $y\notin U$). 
		{There exists a point $x\in \partial C$ such that $y\in [x,c]$, i.e., 
			\[
			y = x + \lambda \frac{c-x}{\norm{c-x}},
			\]
			for some $\lambda \in [0,\norm{c-x}]$. Since $y$ does not belong to 
			$C'$, we must have $\lambda \in [0,\lambda(x))$ in view of \eqref{eq:ucx} and the definition of $U$.}
		Then 
		\[
		\dist(y,{C'})\leq \dist (y,U) \leq \| y - u(x)\| = |\lambda - \lambda(x)| = \lambda(x) - \lambda \leq \lambda(x) \leq \min_{u\in [x,c]} \varphi(u) \leq \varphi(y),
		\]
		which is what we were after. 
		
		Finally, we show that $\relint C'\subset \relint C$. By our construction we have $c \in \relint C' \cap \relint C \neq \emptyset$. Since $C'$ is a subset of $C$, $\relint C' \cap \relint C \neq \emptyset$ implies $\relint C' \subset \relint C$, which follows from, say, \cite[Theorem~18.2]{Ro97}.	
	\end{proof}
	Regarding Lemma~\ref{lem:DclosebdC},
	we  note that since $\relint C' \subseteq \relint C$ holds and $C$ is compact, we have indeed $C' \subseteq C$. Also, since $F \subseteq C'$ holds it must be a face of $C'$.

	\subsection{Sandwiching convex sets}\label{sec:sandwich}
	
	The goal of this subsection is to prove that we can sandwich a compact convex set with desired facial structure between two nested convex sets that share at most one proper exposed face.
	
	\begin{lemma}\label{lem:refinement} Let $C \subset \R^n$ be a compact convex set, and suppose that $F \face C$ is a proper nonempty exposed face. Let $C'\subset C$ be a compact convex set such that ${C'}\cap \partial C = F$.
		
		Then there exists a compact convex set $E\subset \R^n$ such that
		\begin{enumerate}
			\item ${C'}\subseteq E\subseteq C$,
			\item $F \subseteq \partial C\cap E$,  and in particular $F$ is a face of $E$, 
			\item every face of $E$ is either a subface of $F$ or an extreme point. 
		\end{enumerate}
	\end{lemma}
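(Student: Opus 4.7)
I will construct $E$ as the closed convex hull of $F$ together with a countable set $Q = \{q_n\}_{n=1}^{\infty} \subseteq \relint C$, chosen so that $Q$ is dense enough to capture $C'$ and in sufficiently general position that no positive-dimensional face of $E$ lies outside $F$.

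Since $F$ is an exposed face of $C$, fix a linear functional $\ell$ with $\ell \ge 0$ on $C$ and $F = \{x \in C : \ell(x) = 0\}$. For any compact convex $E$ with $F \subseteq E \subseteq C$, we then have $E \cap \{\ell = 0\} = F$, so $F$ is an exposed face of $E$. Moreover, $F \subseteq \partial C$ automatically (as $F$ is proper), so condition~2 is handled as soon as $F \subseteq E \subseteq C$ is established. Next, choose a countable dense subset $\{p_n\}$ of $C' \setminus F$; by the hypothesis $C' \cap \partial C = F$ each $p_n$ lies in $\relint C$. Fix a reference point $f \in \relint F$ and, for each $n$, pick $q_n$ on the open ray from $f$ through $p_n$, extended slightly past $p_n$ but still inside $\relint C$, so that $p_n \in (f, q_n) \subseteq \conv(F \cup \{q_n\})$. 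Setting $E := \cl\conv(F \cup Q)$ then gives $C' \subseteq E \subseteq C$ and $E$ compact convex.

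The main task is to refine the $q_n$ by inductive small perturbations within $\relint C$, preserving the inclusion $p_n \in \conv(F \cup \{q_n\})$ via a compensating micro-adjustment of $f$ to some $f_n \in \relint F$, so that two properties hold simultaneously: (i) $\dist(q_n, F) \to 0$, so that $Q$ accumulates only on $F$; and (ii) at every finite stage the compact convex set $\cl\conv(F \cup \{q_1, \dots, q_n\})$ is in ``general position'' relative to $F$, meaning that none of the $q_i$ lies in the convex hull of the others together with $F$, and no face of this finite-stage set other than $\{q_i\}$'s and subfaces of $F$ exists. Condition (i) is enforced by shrinking the perturbation radii along the sequence, while (ii) cuts out only a nowhere-dense subset of the perturbation ball at each stage, so an inductive selection succeeds. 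By Krein--Milman together with (i), $\mathrm{ext}(E) \subseteq F \cup \cl Q \subseteq F \cup Q$; thus any face $G$ of $E$ with $G \not\subseteq F$ contains some $q_n \in \mathrm{ext}(G)$, and (ii)---upgraded from finite stages to $E$ by a limit argument---forces $G = \{q_n\}$, giving condition~3.

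The principal obstacle is precisely this upgrade of the general position condition from the finite stages to the limit set $E$: in principle, a positive-dimensional face of $E$ not present at any finite stage could be generated by accumulation. The controlled convergence $\dist(q_n, F) \to 0$ is what makes the limit argument go through, because it confines every accumulation point of $Q$ to $F$. Consequently, a hypothetical face $G \not\subseteq F$ of $E$ must already be detectable by finitely many of the $q_n$'s, where the general position condition has been enforced by construction.
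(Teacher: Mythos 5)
There is a fatal problem with the whole ``countable generating set'' strategy, visible at two levels. First, your own requirements are inconsistent: you insist that $p_n \in \conv(F\cup\{q_n\})$, i.e.\ $p_n = \lambda f_n + (1-\lambda)q_n$ with $f_n\in F$, and since $\dist(\cdot,F)$ is convex this forces $\dist(p_n,F)\le \dist(q_n,F)$. But $\{p_n\}$ is dense in $C'\setminus F$, and in any nontrivial instance (certainly in the paper's application) $C'$ contains points at distance bounded away from $F$; hence $\dist(q_n,F)\not\to 0$, contradicting (i). Second, and more fundamentally, no set of the form $E=\cl\conv(F\cup Q)$ with $Q$ accumulating only on $F$ can satisfy conclusions 1 and 3 simultaneously. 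By Milman's theorem $\mathrm{ext}(E)\subseteq F\cup Q$, so outside any $\varepsilon$-neighbourhood of $F$ the set $E$ has only finitely many extreme points. Since $E\supseteq C'$ reaches a positive distance from $F$ and $\dim E\ge 2$, the relative boundary of $E$ outside that neighbourhood is uncountable; hence some relative-boundary point $x$ of $E$ with $\dist(x,F)>\varepsilon$ is not extreme, and the minimal face of $E$ containing $x$ is a positive-dimensional face not contained in $F$ --- exactly what conclusion 3 forbids. In other words, conclusion 3 forces $E$ to be \emph{strictly convex away from $F$}, i.e.\ to have uncountably many extreme points off $F$, which a countable $Q$ accumulating only on $F$ can never supply. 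The ``general position'' device (ii) cannot rescue this: general position makes each $q_i$ a vertex but does not prevent edges (every simplex is in general position and has edges), and indeed your finite-stage requirement is already unachievable --- e.g.\ if $F$ is a single exposed point, $\conv(F\cup\{q_1,q_2\})$ is a triangle whose edge $[q_1,q_2]$ is a proper one-dimensional face not contained in $F$ --- so it is not a nowhere-dense condition one can enforce by perturbation; it is empty.

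The paper's proof is shaped precisely by this obstruction and goes the dual route: it first ``fattens'' $C'$ (Proposition~\ref{prop:approxwtg}, via Lemma~\ref{lem:DclosebdC} with $\varphi=\dist^2(\cdot,H)$) to a set $G$ with $\relint G\subseteq\relint C$, $G\cap\partial C=F$, and the same supporting hyperplanes as $C$ at all points of $F$; then (Proposition~\ref{prop:sandwich}) it defines $E$ as the intersection of supporting half-spaces at points of $G\cap\partial C$ and of smallest-radius Euclidean balls separating $G$ from each boundary point of $C$ off $F$. Intersecting with balls is what makes $E$ round (strictly convex) away from $F$, so that all faces off $F$ collapse to extreme points; a convex hull of $F$ and countably many points cannot have this property. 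If you want to salvage an inner-approximation idea, you must generate $E$ by uncountably many constraints (or generators), not by a sequence.
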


	The proof of Lemma~\ref{lem:refinement} depends on two intermediate sandwiching results, 
	Propositions~\ref{prop:approxwtg} and~\ref{prop:sandwich}.
	We begin with Proposition~\ref{prop:approxwtg}, which follows from Lemma~\ref{lem:DclosebdC} with an appropriate choice of $\varphi$. We then establish several technical statements leading up to the proof of Proposition~\ref{prop:sandwich}.

	\begin{proposition}\label{prop:approxwtg} Suppose that $C\subset \R^n$ is a compact convex set, $F\face C$ is a proper exposed face, and $D \subseteq C$ is a compact convex subset  such that $D\cap \partial C = F$. 
		Then there exists a compact convex set $G$ such that $D\subseteq G\subset C$, $\relint G\subseteq \relint C$, $G\cap \partial C = F$  and also $G$ and $C$ share the set of supporting hyperplanes through every $x\in F$.
	\end{proposition}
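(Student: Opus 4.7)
The plan is to apply Lemma~\ref{lem:DclosebdC} with a weight function $\varphi$ that vanishes to second order along $F$, and then enlarge the resulting inner approximation so that it absorbs $D$. As hinted by the remark following Theorem~\ref{thm:amalgamation}, the quadratic decay of $\varphi$ is what will force the tangent cones---and hence the supporting hyperplanes---of the approximation to coincide with those of $C$ along $F$.

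Concretely, I would pick a hyperplane $H$ with $F = C \cap H$, which exists because $F$ is exposed, and set $\varphi(x) := \dist(x,H)^2$; this is continuous on $C$ and vanishes there exactly on $F$. Lemma~\ref{lem:DclosebdC} then produces a compact convex set $C'$ with $\relint C' \subseteq \relint C$, $C' \cap \partial C = F$, and $\dist(x,C') \leq \varphi(x)$ for every $x \in C$. Set $G := \conv(C' \cup D)$, which is compact (as a continuous image of $C' \times D \times [0,1]$), convex, and satisfies $D \subseteq G \subseteq C$. The identity $G \cap \partial C = F$ follows from a case analysis on $z = (1-t)x + ty$ with $x \in C'$, $y \in D$, $t \in [0,1]$: if either endpoint lies in $\relint C$ then \eqref{eq:risegm} forces $z \in \relint C$ unless $t \in \{0,1\}$, so the only way $z$ can lie in $\partial C$ is for the relevant endpoints to be in $C' \cap \partial C = F$ or $D \cap \partial C = F$, making $z \in F$ by convexity. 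Finally, since the construction of Lemma~\ref{lem:DclosebdC} places a relative neighborhood of some $c \in \relint C$ inside $C'$, we have $\aff C' = \aff G = \aff C$, and combined with $G \subseteq C$ this yields $\relint G \subseteq \relint C$.

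The substantive step is the supporting-hyperplane condition, which I would reformulate as the equality of tangent cones $T_G(x_0) = T_C(x_0)$ for every $x_0 \in F$, where $T$ denotes the tangent cone. The inclusion $T_G(x_0) \subseteq T_C(x_0)$ is immediate from $G \subseteq C$. For the reverse, let $y \in C$ and set $y_t = (1-t)x_0 + ty$; since $x_0 \in H$ we have $\dist(y_t, H) = t\,\dist(y,H)$, so $\varphi(y_t) = t^2 \dist(y,H)^2$. The approximation bound of Lemma~\ref{lem:DclosebdC} then supplies $z_t \in C' \subseteq G$ with $\|z_t - y_t\| \leq t^2 \dist(y,H)^2$, and therefore $(z_t - x_0)/t = (y - x_0) + (z_t - y_t)/t \to y - x_0$ as $t \to 0^+$. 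This places $y - x_0$ in $T_G(x_0)$, and passing to the closure yields $T_C(x_0) \subseteq T_G(x_0)$. Equal tangent cones correspond to equal normal cones, giving the desired match of supporting hyperplanes through each $x_0 \in F$. The main obstacle is precisely this quadratic estimate---a merely linear weight would not suffice to equate the tangent cones, whereas $\dist(\cdot,H)^2$ does.
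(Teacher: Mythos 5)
Your proposal is correct and follows essentially the same route as the paper: the same choice $\varphi(x)=\dist^2(x,H)$ fed into Lemma~\ref{lem:DclosebdC}, the same set $G=\conv(C'\cup D)$, and the same convex-combination argument for $G\cap\partial C=F$. The only difference is cosmetic: where the paper rules out a new supporting hyperplane at $x\in F$ by the direct estimate $\lambda\alpha\leq\lambda^2\|u-q\|^2$ for small $\lambda$, you phrase the same quadratic-decay mechanism as equality of tangent cones along $F$ and then pass to normal cones by polarity, which is an equivalent formulation.
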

	The idea here is that we can ``fatten'' $D$ (which may, say, be contained entirely in $\partial C$) to obtain an intermediary convex set $G$ whose relative interior is contained in $\relint C$ and has the same supporting hyperplanes with $C$ at points of $F$.
	\begin{proof}
		Let $H$ be a hyperplane exposing $F$, and for every $x\in C$ let
		\[
		\varphi(x) \coloneqq \dist^2 (x, H).
		\]
		Notice that $\varphi$ is continuous, nonnegative and, for $x\in C$, we have $\dist (x,H) =0$ if and only if $x\in F$. 
		Then by Lemma~\ref{lem:DclosebdC} there exists a compact convex set $Q$ such that $\partial C\cap Q = F$, $\relint Q\subseteq \relint C$ and 
		\begin{equation}\label{eq:xq}
		\dist (x,Q)\leq \varphi(x) = \dist^2(x,H), \quad \forall \, x\in C.
		\end{equation}
		
		Let $G\coloneqq \conv (D\cup Q)$. Observe that the set $G$ is compact, convex and that $G\subseteq C$ holds.
		Let us verify that $G\cap \partial C = F$. 
		Since $F \subseteq D$ and $F \subseteq Q$, we have $G\cap \partial C \supseteq F$. Conversely if $x \in G\cap \partial C$, then $x$ is a strict convex combinations of elements in $D$ and $Q$. 
		Those elements must all be in $\partial C$, since if at least one of them is $\relint C$ we would have $x \in \relint C$. Because $D \cap \partial C = Q \cap \partial C = F$, we conclude that these elements are all in $F$, so $x$ is in $F$ as well.
		
		Since $F$ is a proper face of $C$,
	 the equality $G\cap \partial C = F$ implies that $G$ is properly contained in $C$ as $F$ is properly contained in $\partial C$. 
		
		It remains to verify that $G$ and $C$ have the same set of supporting hyperplanes at the points of $F$. Since $G$ is a subset of $C$, every supporting hyperplane of $C$ at $x \in F$ is a supporting hyperplane of $G$ at $x$. Assume that the converse is not true: there is some point $x\in F$, a normal $p\in \R^n$ with $\norm{p} = 1$ and some $u\in C\setminus G$ such that
		\[
		\langle v-x,p\rangle \leq 0 \quad \forall v\in G, \quad \langle u-x,p\rangle = \alpha >0.
		\]
		Let $\lambda \in [0,1]$ and define
		\[
		u_\lambda \coloneqq x + \lambda (u-x).
		\]
		By convexity, $u_\lambda \in C$. 
		Denoting 
		\[
		S \coloneqq \{v\, |\, \langle v-x,p\rangle \leq 0\}, 
		\]
		we observe that $G\subseteq S$ and so 
		\[
		\dist(u_\lambda, G) \geq \dist(u_\lambda, S).
		\]
		On the other hand, $u_\lambda \notin S$, and hence its projection onto $S$ is 
		\[
		w_\lambda =  u_\lambda - \langle u_\lambda-x,p\rangle p.
		\]
		We hence have
		\begin{equation}\label{eq:ug}
		\dist (u_\lambda, G) \geq \|u_\lambda - w_\lambda\| = | \langle u_\lambda-x,p\rangle| = \lambda |\langle u-x,p\rangle| = \lambda \alpha.
		\end{equation}
		
		On the other hand, since $u\notin F$ (recall that $u\in C\setminus G$ and $F\subset G\subset  C$), we must have $u\notin H$ (recall that $H$ is a hyperplane exposing $F$)  and hence $d \coloneqq \dist(u,H)>0$. 
		Let $q$ be the projection of $u$ onto $H$. 
		For any $\lambda \in \R$, letting $q_\lambda \coloneqq x + \lambda (q-x)$, we have $q_\lambda \in H$. We conclude that 
		\[
		\dist(u_\lambda, H) \leq \|u_\lambda - q_\lambda\| = \lambda \|u - q\|. 
		\]

		This together with \eqref{eq:ug}, $G = \conv(D\cup Q)$ and \eqref{eq:xq} leads to
		\[
		\lambda \alpha \leq \dist(u_\lambda,G)\leq \dist(u_\lambda,Q) \leq (\dist (u_\lambda,H))^2 \leq \lambda^2 \|u-q\|^2,  
		\]
		and so
		\[
		\lambda \geq \frac{\alpha}{\|u-q\|^2},
		\]
		which cannot hold for \emph{every} $\lambda \in [0,1]$.  Hence the pair of sets $G$ and $C$ share the set of supporting hyperplanes at all points of $F$.
	\end{proof}

Next, we take care of several technical statements that will be necessary to prove Proposition~\ref{prop:sandwich}. This will be followed by 
a proof of Lemma~\ref{lem:refinement} and then, finally, a proof of Theorem~\ref{thm:amalgamation}.
In what follows, a closed \emph{Euclidean ball} is a set of the form 
$\{u \in \R^n \mid \norm{u-c} \leq r\}$ for some fixed $c \in \R^n$.

	\begin{proposition}\label{prop:ballsep} Let $D\subset \R^n$ be a compact convex set, and suppose that $x\notin D$. Then there exists a closed Euclidean ball $B$ of minimal radius such that this ball separates $x$ from $D$, that is $D\subseteq B$ and  
		$x\in \cl (\R^n\setminus B)$. 
	\end{proposition}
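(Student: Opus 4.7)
The plan is to consider the family
\[
\mathcal{F} \coloneqq \{ B \subset \R^n \mid B \text{ is a closed Euclidean ball},\ D\subseteq B,\ x\notin \interior B\},
\]
and show first that $\mathcal{F}$ is nonempty, and then that the infimum of the radii is attained. Note that since each $B \in \mathcal{F}$ is closed, the condition $x\in \cl(\R^n\setminus B)$ in the statement is equivalent to $x \notin \interior B$, which is what I will use.

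To establish $\mathcal{F}\neq\emptyset$, I would invoke strict separation: since $D$ is compact convex and $x\notin D$, there exist a unit vector $n\in \R^n$ and $\alpha\in \R$ with $\sup_{y\in D}\inProd{n}{y}<\alpha<\inProd{n}{x}$. I would then construct balls whose centers recede along $-n$ and so locally flatten to the separating hyperplane. Explicitly, for $t>0$ let $c_t\coloneqq -tn$ and $r_t\coloneqq \|x-c_t\|=\|x+tn\|$; the ball $B_t\coloneqq \{y : \|y-c_t\|\leq r_t\}$ has $x$ on its boundary by construction. The key calculation is
\[
\|x+tn\|^2-\|y+tn\|^2=\|x\|^2-\|y\|^2+2t\inProd{n}{x-y}\quad \forall y\in D,
\]
whose right-hand side grows linearly in $t$ with a strictly positive coefficient uniformly in $y\in D$ (using compactness and the strict separation), so for all $t$ sufficiently large $B_t$ contains $D$ and therefore $B_t\in\mathcal{F}$.

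For attainment, set $r^*\coloneqq \inf_{B\in \mathcal{F}} r(B)\geq 0$ and choose a minimizing sequence $B_k\in \mathcal{F}$ with radii $r_k\to r^*$ and centers $c_k$. Fixing any $d_0\in D$, the bound $\|c_k-d_0\|\leq r_k$ keeps $(c_k)$ bounded, so after passing to a subsequence $c_k\to c^*$. The candidate limit $B^*\coloneqq \{y : \|y-c^*\|\leq r^*\}$ satisfies $D\subseteq B^*$ by passing to the limit in $\|y-c_k\|\leq r_k$ for each $y\in D$, and $\|x-c^*\|\geq r^*$ by passing to the limit in $\|x-c_k\|\geq r_k$ (which holds because $x\notin \interior B_k$); hence $B^*\in\mathcal{F}$ and has radius $r^*$.

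The only step with any real content is the nonemptiness of $\mathcal{F}$, since the continuity of radius and center in the Hausdorff (or pointwise) sense makes the minimization routine once one knows the minimizing sequence of centers is bounded. The separation-plus-large-ball construction above is the standard way around this, and the linear-in-$t$ dominance in the key calculation is what makes it go through.
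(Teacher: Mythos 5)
Your proof is correct and follows essentially the same route as the paper: strong separation of $x$ from the compact convex set $D$ is used to produce one large ball containing $D$ with $x$ on its boundary (the paper fixes the explicit radius $r=M^2/(2d)$ with center $x+rp$, while you let the center recede along the separating direction and take $t$ large — the same idea), and the minimal radius is then attained by extracting a convergent subsequence of centers from a minimizing sequence, exactly as in the paper.
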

	\begin{proof} 
		Suppose that $D$ is a compact convex set and $x\notin D$. 
		Then, $x$ can be strongly separated from $D$ (e.g., \cite[Theorem~11.4]{Ro97}) and there exists $p\in \R^n$, $\|p\| =1$, such that 
		\[
		d \coloneqq \inf_{u\in D}\langle u,p\rangle - \langle x, p\rangle >0.
		\]
		Since $D$ is compact, there exists some $M>0$ such that 
		\[
		M\geq \sup_{u\in D} \|x-u\|.
		\]
		Let $r \coloneqq \frac{M^2}{2 d}$, $c \coloneqq x + r p$. Then 
		\[
		\|x- c\|  = r,
		\]
		and for any $u\in D$
		\[
		\|u - c\|^2 = \left\| (u-x) - r p \right\|^2 = \|u-x\|^2 - 2 r \langle u-x, p\rangle + r^2  \leq M^2 - 2 d r  + r^2 = r^2,
		\]
		hence $\|u-c\|\leq r$ for all $u\in D$. We have found a ball $B$ centred at $c$ and of radius $r$ containing $D$ and such that $x \in \cl (\R^n\setminus B)$.
		
		It remains to show that there exists a ball of this kind that has  minimal radius. 
		Suppose that $\bar r$ is the infimum over all radii of balls that separate $x$ from $D$; since there is at least one such ball, we must have that $\bar r$ finite and $\bar r\geq 0$. Then there exists some sequence $(B_k)$ of balls with centres $c_k$ and radii $r_k$, such that $r_k\to \bar r$.
		
		Since $(r_k)$ is bounded, $c_k$ is bounded as well. 
		By selecting a convergent subsequence if necessary, we have $c_k\to \bar c$. 
		Since for every $u\in D$ we have $\|c_k-u\|\leq r_k$, in the limit we obtain $\|\bar c - u\|\leq \bar r$; likewise, since $\|c_k - x\| \geq r_k$ for all $k$, we have $\|\bar c - x\|\geq \bar r$. Therefore the ball of radius $\bar r$ centred at $\bar c$ is a ball of smallest radius separating $x$ from $D$.
	\end{proof}
	
	\begin{proposition}\label{prop:outside} Suppose that $x\in (y,z) \subset \R^n$, where $y\neq z$, and $B$ is a closed Euclidean ball such that $z \in B$ and $x\in \cl(\R^n\setminus B)$. Then $y\notin B$.
	\end{proposition}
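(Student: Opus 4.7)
The plan is a short proof by contradiction, leveraging strict convexity of the squared Euclidean norm along a line segment. Let $c$ denote the centre of $B$ and $r$ its radius, so that $B = \{u \in \R^n : \|u-c\| \leq r\}$. Since $B$ is closed and its complement is open, we have $\cl(\R^n\setminus B) = \{u : \|u-c\| \geq r\}$ (and the degenerate case $r=0$ would force $y=z$ via $z\in B$, contradicting $y\neq z$). Hence the hypotheses read $\|z-c\| \leq r$ and $\|x-c\| \geq r$.

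First I would suppose, toward a contradiction, that $y \in B$, so $\|y-c\| \leq r$ as well. Since $B$ is convex and contains both endpoints of $[y,z]$, it contains the whole segment; in particular $x \in B$, so $\|x-c\| \leq r$. Combined with the lower bound this yields $\|x-c\| = r$.

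Next I would parametrise the segment by $t \mapsto (1-t)y + tz$ and consider $f(t) \coloneqq \|(1-t)y + t z - c\|^2$ on $[0,1]$. A direct expansion shows that $f$ is a quadratic polynomial in $t$ with leading coefficient $\|z-y\|^2 > 0$ (using $y \neq z$), hence strictly convex. Writing $x = (1-\lambda) y + \lambda z$ for some $\lambda \in (0,1)$, we have $f(0) \leq r^2$, $f(1) \leq r^2$, while $f(\lambda) = r^2$. Strict convexity at an interior point of $[0,1]$ then gives $f(\lambda) < (1-\lambda) f(0) + \lambda f(1) \leq r^2$, contradicting $f(\lambda) = r^2$. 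Hence $y \notin B$.

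I do not anticipate any serious obstacle here: the argument is entirely elementary, and the only point requiring care is the identification $\cl(\R^n\setminus B) = \{u : \|u-c\| \geq r\}$, which is a standard topological observation for closed Euclidean balls.
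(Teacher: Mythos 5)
Your proof is correct, and it reaches the contradiction by a route that differs from the paper's in its key step. Both arguments begin identically: assume $y\in B$, use convexity of $B$ to get $x\in B$, and combine with $x\in\cl(\R^n\setminus B)$ to pin $\|x-c\|=r$. The paper then works with inner products: it writes $x=(1-t)z+ty$, expands $\inProd{y-c}{x-c}$, and plays a Cauchy--Schwarz upper bound $\inProd{y-c}{x-c}\le r^2$ against a strict lower bound $\inProd{y-c}{x-c}>r^2$, which forces a separate discussion of the equality case in Cauchy--Schwarz (the collinear situation, excluded because $x\neq z$). You instead observe that $f(t)=\|(1-t)y+tz-c\|^2$ is a strictly convex quadratic (leading coefficient $\|z-y\|^2>0$) with $f(0),f(1)\le r^2$ but $f(\lambda)=r^2$ at the interior parameter $\lambda$ corresponding to $x$, and strict convexity gives $f(\lambda)<r^2$ immediately. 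This buys you a cleaner finish with no equality-case analysis; in fact you could even skip pinning $f(\lambda)=r^2$, since $f(\lambda)\ge r^2$ from the closure hypothesis already contradicts $f(\lambda)<\max\{f(0),f(1)\}\le r^2$. One tiny remark: your parenthetical about $r=0$ is slightly off as stated ($z\in B$ alone does not force $y=z$; rather, under the contradiction hypothesis $y\in B$ one gets $y=c=z$), but this is inessential --- the identification $\cl(\R^n\setminus B)=\{u:\|u-c\|\ge r\}$ and your main argument go through for $r=0$ as well.
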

	\begin{proof} Suppose that 
		\[
		B = \{u\in \R^n\, |\, \|u-c\|\leq r\}
		\]
		for some $c\in \R^n$ and $r>0$. Then $\|x-c\|\geq r$ and $\|z-c\|\leq r$.
		Assume that on the contrary $y\in B$. 
		Then, the assumption that  $x\in (y,z)$ leads to $x \in B$, 
		which implies that $\|x-c\|=r$. Since $y\in B$, we have 
		\[
		\langle y-c,x-c\rangle \leq \|y-c\|\cdot \|x-c\|\leq r^2.
		\]
		Since $x = (1-t ) z + t y $ for some $t\in (0,1)$, we have 
		\begin{align*}
			\langle y-c, x-c\rangle & =  \langle \frac 1 t x - \frac{1-t}{t} z -c , x-c\rangle \\
			& =  \frac 1 t \langle x-c, x-c\rangle - \frac{1-t}{t} \langle z-c, x-c\rangle \\
			& >  \frac 1 t r^2- \frac{1-t}{t} r^2 = r^2.
		\end{align*}
		Here to obtain the strict inequality we used $\langle z-c, x-c\rangle <r^2$ (from Cauchy-Schwarz we have $\langle z-c, x-c\rangle< \|z-c\|\cdot \|x-c\|\leq r^2$, unless the two vectors featuring in the inner product are collinear, but in that case $\langle z-c, x-c\rangle = r^2$ if and only if $x=z$).
		
		The inequality  $\langle y-c, x-c\rangle>r^2$ contradicts the opposite inequality obtained earlier, hence, our assumption is wrong and $y\notin B$. 
	\end{proof}

	\begin{proposition}\label{prop:infty}
		Suppose that $[u,v] \subset \R^n$ is such that $u\neq v$, and $(w_k) \subset \R^n$ is such that $w_k\to w = \frac 1 2 u + \frac 1 2 v$ and $w_k \notin [u,v]$ for all $k$. Suppose that $(B_k)$ is a sequence of Euclidean balls that separate $w_k$ from $[u,v]$, that is, such that $[u,v]$ is inside $B_k$, and $w_k$ is in the closure of the complement of $B_k$.  Then the centers of these balls depart to infinity, that is, if $B_k$ is centred at $c_k$, then $\|c_k\|\to +\infty$.
	\end{proposition}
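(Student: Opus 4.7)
The plan is to use the ball inclusions $u,v\in B_k$ to put an \emph{upper} bound on $\|c_k-w\|$ in terms of the radius $r_k$ of $B_k$, and then to use the exclusion $w_k\in \cl(\R^n\setminus B_k)$ together with $w_k\to w$ to force a \emph{lower} bound on $\|c_k-w\|$ that blows up as $k\to\infty$. Writing $r_k$ for the radius of $B_k$ and $c_k$ for its center, the three conditions we have in hand are $\|c_k-u\|\le r_k$, $\|c_k-v\|\le r_k$, and $\|c_k-w_k\|\ge r_k$.

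The first step is the observation that since $w=\tfrac12 u+\tfrac12 v$, the parallelogram identity applied to the vectors $c_k-u$ and $c_k-v$ (whose sum is $2(c_k-w)$ and whose difference is $v-u$) yields
\[
\|c_k-u\|^2+\|c_k-v\|^2=2\|c_k-w\|^2+\tfrac{1}{2}\|u-v\|^2.
\]
Combining this with the two inequalities $\|c_k-u\|^2,\|c_k-v\|^2\le r_k^2$ gives the key estimate
\[
\|c_k-w\|^2\le r_k^2-\tfrac{1}{4}\|u-v\|^2.
\]

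The second step squares the separation condition $\|c_k-w_k\|\ge r_k$ and subtracts the previous bound to obtain
\[
\|c_k-w_k\|^2-\|c_k-w\|^2\ge \tfrac{1}{4}\|u-v\|^2.
\]
Expanding the left-hand side via $\|c_k-w_k\|^2=\|c_k-w\|^2+2\langle c_k-w,w-w_k\rangle+\|w-w_k\|^2$ and applying Cauchy--Schwarz then yields
\[
2\|c_k-w\|\,\|w-w_k\|+\|w-w_k\|^2\ge \tfrac{1}{4}\|u-v\|^2.
\]

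Since $u\neq v$ and $w_k\to w$, the term $\|w-w_k\|^2$ is eventually smaller than $\tfrac18\|u-v\|^2$, so for all sufficiently large $k$ we get $\|c_k-w\|\,\|w-w_k\|\ge \tfrac{1}{16}\|u-v\|^2>0$. Dividing by $\|w-w_k\|\to 0$ forces $\|c_k-w\|\to\infty$, and therefore $\|c_k\|\to\infty$ as claimed. The only non-routine ingredient is spotting the parallelogram identity that converts the two ball-containment constraints into a single constraint centered at the midpoint $w$; after that, the argument is a short quantitative comparison and I do not anticipate any genuine obstacle.
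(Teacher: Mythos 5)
Your proof is correct and follows essentially the same route as the paper: both derive the key estimate $\|c_k-w\|^2 \le r_k^2 - \tfrac14\|u-v\|^2$ from $u,v\in B_k$ and then play it against $\|c_k-w_k\|\ge r_k$ together with $w_k\to w$. The only cosmetic difference is the finish: the paper uses the triangle inequality and first shows $r_k\to\infty$ by a subsequence argument, whereas you cancel $r_k$ and obtain a direct quantitative lower bound on $\|c_k-w\|$ via Cauchy--Schwarz, which is slightly more streamlined.
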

	\begin{proof}
		For each ball $B_k$, denote its center and radius by $c_k$ and $r_k$, respectively.
		With that, we have  $\|u-c_k\| \leq r_k,\|v-c_k\|\leq r_k$ and $\|w_k-c_k\|\geq r_k$, for every $k$. 
		
		Let $D \coloneqq \|u-v\|$. Then for any $k$
		\[
		D^2 = \|u-v\|^2 = \|u-c_k\|^2 -  2 \langle u-c_k , v-c_k\rangle + \|v-c_k\|^2  \leq 2 (r_k^2 - \langle u-c_k , v-c_k\rangle),
		\]
		and hence 
		\[
		\langle u-c_k , v-c_k\rangle \leq r_k^2 - \frac{D^2}{2}.
		\]
		We hence have 
		\[
			\norm{c_k-w}^2 = \left\|\frac{1}{2}(c_k-u)+\frac{1}{2}(c_k-v)\right\|^2  = \frac{1}{4}\|c_k-u\|^2  + \frac{1}{4}\|c_k-v\|^2 + 
		\frac{1}{2} \langle c_k-u, c_k-v\rangle \leq r_k^2 - \frac{D^2}{4}.
		\]
		By the triangle inequality
		\[
		\|c_k-w\|\geq \|c_k-w_k\| - \|w-w_k\| \geq r_k - \|w-w_k\|,
		\]
		From the last two inequalities we have
		\[
		\sqrt{r_k^2 - \frac{D^2}{4}} \geq r_k-\|w-w_k\|.
		\]
		Since $w_k$ converges to $w$ and $D \neq 0$, the existence of some subsequence of $r_k$ for which $r_{k_i}$ converges to some finite $\bar{r}$ would lead to a contradiction. 
		Therefore, $\lim_{k \to \infty} r_k = \infty$.
		But this also yields $\|c_k-w\|\to +\infty$, and hence 
		\[
		\|c_k\| \geq \|c_k-w\|-\|w\|\to +\infty. 
		\]
	\end{proof}
	
\begin{proposition}\label{prop:ballstoplanes}
	Suppose that $C \subset \R^n$ is compact  and convex, and the sequences $(c_k)$, $(r_k)$ and $(w_k)$ are such that $\|c_k\|\to \infty$, $c_k/\|c_k\|\to p$, $w_k\to \bar w\in C$, and 
	\begin{equation}
		\label{eq:technical09}
		\|c_k-x\|\leq r_k\leq \|c_k-w_k\|, \; \forall x\in C, \forall k.
	\end{equation}
	Then 
	\begin{equation}\label{eq:xgeq}
		\langle x-\bar w,p\rangle \geq 0, \quad \forall x\in C.
	\end{equation}
	Moreover, if a sequence $(y_k)$ is such that $y_k\to y $ and 
	\[
	\|c_k-y_k\|\geq r_k, \quad \forall k,
	\]
	then
	\begin{equation}\label{eq:ykleq}
		\langle y-\bar w,p\rangle \leq 0.
	\end{equation}
\end{proposition}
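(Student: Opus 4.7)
The plan is to exploit a \emph{balls-to-half-spaces} intuition: as the centres $c_k$ escape to infinity in direction $p$, the sphere bounding each ball $\{u:\|u-c_k\|\leq r_k\}$ looks, near any fixed bounded region, more and more like a hyperplane with normal $p$. To realise this concretely, I would first square the inequality $\|c_k-x\|\leq \|c_k-w_k\|$ (valid for every $x\in C$ and every $k$ by \eqref{eq:technical09}). Expanding both sides, the $\|c_k\|^2$ terms cancel, and rearrangement yields
\[
\bigl\langle c_k,\, w_k - x\bigr\rangle \;\leq\; \tfrac{1}{2}\bigl(\|w_k\|^2 - \|x\|^2\bigr).
\]
Dividing through by $\|c_k\|$ (eventually nonzero), the left-hand side becomes $\langle c_k/\|c_k\|,\, w_k-x\rangle$ and converges to $\langle p,\, \bar w - x\rangle$ by continuity of the inner product together with the hypotheses $c_k/\|c_k\|\to p$ and $w_k\to\bar w$. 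The right-hand side has bounded numerator (since the convergent sequence $(w_k)$ is bounded) and a denominator tending to infinity, hence vanishes in the limit. Passing to the limit gives $\langle p,\, \bar w - x\rangle \leq 0$ for every $x\in C$, which is exactly \eqref{eq:xgeq}.

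For the second claim I would use $\bar w$ in place of $w_k$ as the ``inside'' witness to $r_k$: since $\bar w\in C$, \eqref{eq:technical09} supplies $\|c_k-\bar w\|\leq r_k$, while the added hypothesis gives $\|c_k-y_k\|\geq r_k$. Squaring, cancelling the $\|c_k\|^2$ contributions and rearranging now produces
\[
\bigl\langle c_k,\, y_k - \bar w\bigr\rangle \;\leq\; \tfrac{1}{2}\bigl(\|y_k\|^2 - \|\bar w\|^2\bigr),
\]
and dividing by $\|c_k\|$ and passing to the limit (using that $y_k\to y$ and therefore $\|y_k\|^2$ is bounded) yields $\langle p,\, y-\bar w\rangle \leq 0$, which is \eqref{eq:ykleq}.

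The main obstacle is conceptual rather than technical: dividing the distance inequalities $\|c_k-x\|\leq \|c_k-w_k\|$ or $\|c_k-\bar w\|\leq \|c_k-y_k\|$ directly by $\|c_k\|$ is useless because both sides tend to infinity. The key observation is that squaring first causes the dominant $\|c_k\|^2$ contributions to cancel, leaving a \emph{linear} inequality in $c_k$ that is exactly what is needed to extract the normalised direction limit $c_k/\|c_k\|\to p$. Once this trick is noted, both halves of the proposition reduce to the same two-line computation and a limit whose nontrivial part is only the verification that the right-hand side stays bounded while the denominator $\|c_k\|$ diverges.
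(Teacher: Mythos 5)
Your proposal is correct and follows essentially the same route as the paper's proof: square the distance inequalities from \eqref{eq:technical09}, let the $\|c_k\|^2$ terms cancel, divide by $\|c_k\|$, and pass to the limit using $c_k/\|c_k\|\to p$, $w_k\to\bar w$ (resp.\ $y_k\to y$) and the boundedness of the remaining quadratic terms. The second half, substituting $\bar w$ as the point of $C$ and $y_k$ as the outside witness, is likewise exactly the paper's argument.
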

\begin{proof} 
	
	From \eqref{eq:technical09} we have  
	\[
	\|c_k - x\|^2\leq r^2_k \leq \|c_k - w_k\|^2,  \; \forall x\in C, \forall k.
	\]
	Expanding and subtracting the first inequality from the second one we obtain 
	\[
	2 \langle c_k, x - w_k\rangle +\|w_k\|^2- \|x\|^2 \geq 0, \quad \forall x\in C.
	\]
	Rearranging and dividing by $\|c_k\|$ (since $\|c_k\|\to \infty$, we can assume that $\|c_k\|\neq 0$ for sufficiently large $k$), we have 
	\[
	\left\langle\frac{ c_k}{\|c_k\|}, x - w_k\right \rangle \geq \frac{\|x\|^2- \|w_k\|^2}{2\|c_k\|},\qquad \forall x\in C.
	\]
	Taking the limit on both sides as $k\to \infty$ (for a fixed $x$) we obtain $\langle x-\bar w,p\rangle \geq 0$ for all $x\in C$. 
	
	Now assume that $(y_k)$ is such that $y_k\to y$ and $\|c_k-y_k\|^2\geq r_k^2$, $\forall k$.	Since $\bar w\in C$, by the first relation of \eqref{eq:technical09} we have 
	$\|c_k-\bar w\|^2\leq r_k^2$, $\forall k$.
	Using the same approach as before, we have 
	\[
	\left\langle\frac{ c_k}{\|c_k\|}, \bar w - y_k\right \rangle \geq \frac{\|\bar w\|^2- \|y_k\|^2}{2\|c_k\|}, \quad \forall k,
	\]
	and taking the limit on both sides as $k\to \infty$ we obtain \eqref{eq:ykleq}.
\end{proof}	
	
	\begin{proposition}\label{prop:sandwich}
		Let $C,D\subset\R^n$ be compact convex sets such that
		\begin{enumerate}
			\item $\relint D\subseteq \relint C$; 
			\item for $x\in D\cap \partial C$ the set of supporting half-spaces at $x$ is the same for $D$ and $C$.
		\end{enumerate}
		Then there exists a compact convex set $E$ such that $D\subseteq E \subseteq C$ and the only proper faces of $E$ that are not extreme points lie within $D\cap \partial C$. 
	\end{proposition}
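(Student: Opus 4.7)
The plan is to define $E$ as an intersection of closed Euclidean balls produced by Proposition~\ref{prop:ballsep}. For each $z\in C\setminus D$, let $B_z$ be a closed Euclidean ball of minimum radius with $D\subseteq B_z$ and $z\in\cl(\R^n\setminus B_z)$, and set
\[
E := \bigcap_{z\in C\setminus D} B_z.
\]
Then $E$ is compact and convex, and $D\subseteq E$ holds since each $B_z$ contains $D$.

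To establish $E\subseteq C$, I fix $y\notin C$ and choose $c\in\relint D\subseteq\relint C$; the segment $[c,y]$ crosses $\partial C$ at some $z^*\in(c,y)$. If $z^*\notin D$, Proposition~\ref{prop:outside} applied with $B_{z^*}$ (using $c\in D\subseteq B_{z^*}$ and $z^*\in\cl(\R^n\setminus B_{z^*})$) immediately gives $y\notin B_{z^*}$, hence $y\notin E$. If $z^*\in D\cap\partial C$, the hypothesis on shared supporting half-spaces supplies a supporting hyperplane $H$ of $D$ at $z^*$ with $y$ on the exterior side; I would then construct a sequence $z_k\in C\setminus D$ approaching $z^*$ whose balls $B_{z_k}$ degenerate to $H$ (centers escaping to infinity by Proposition~\ref{prop:infty}, limit direction extracted by Proposition~\ref{prop:ballstoplanes}), forcing $y\notin B_{z_k}$ for large $k$.

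For the face analysis, let $F\face E$ be a proper face with $\dim F\geq 1$; pick distinct $a,b\in F$ and set $m=(a+b)/2\in\relint F\subseteq\partial E$. I rule out alternatives to $m\in D\cap\partial C$ progressively. If $m\in C\setminus D$, then $B_m$ is in our family with $m\in\partial B_m$, so Proposition~\ref{prop:outside} applied to $[a,b]$ through $m$ yields $a\notin B_m$, contradicting $a\in E\subseteq B_m$; hence $m\in D$. If $m\in\relint D$, then $m\in\relint E$, contradicting $m\in\partial E$; hence $m\in\partial D$. Finally, if $m\in\partial D\cap\relint C$, Proposition~\ref{prop:outside} prevents $[a,b]$ from lying on any single $\partial B_z$, so the flatness of $F$ must arise from a sequence $B_{z_k}$ with centers going to infinity (Proposition~\ref{prop:infty}); Proposition~\ref{prop:ballstoplanes} yields a supporting hyperplane of $D$ at $m$ containing $F$. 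Because $\relint D\subseteq\relint C$, the neighborhood of $m$ in $C\setminus D$ contains approach directions straddling every supporting hyperplane of $D$ at $m$, so running the limit argument along different approach directions yields transverse supporting hyperplanes of $D$ at $m$; $F$ lies in their intersection, which is a proper affine subspace, forcing $\dim F=0$ and a contradiction. Therefore $m\in\partial C$, so $m\in D\cap\partial C$.

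The main obstacle is executing the multi-directional limit argument for $m\in\partial D\cap\relint C$ rigorously: one must show that sufficiently many distinct ``approach directions'' in $C\setminus D$ toward $m$ yield enough transverse supporting hyperplanes of $D$ at $m$ to force $F$ to collapse to a point. This relies on the hypothesis $\relint D\subseteq\relint C$, which guarantees that $C\setminus D$ fully surrounds $m$, together with Proposition~\ref{prop:ballstoplanes} providing the needed control over limit normal directions.
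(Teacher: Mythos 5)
Your construction intersects the minimum-radius separating balls $B_z$ over \emph{all} $z\in C\setminus D$, including points arbitrarily close to $D$, and this collapses $E$ down to (essentially) $D$ itself, which defeats the purpose of the proposition. Indeed, for any $q\notin D$, pick $d\in\relint D$ and let $z_0$ be the last point of $D$ on $[d,q]$; for all but an exceptional set of choices of $d$ one has $z_0\in\relint C$, so there exist points $z\in(z_0,q)$ lying in $C\setminus D$, and Proposition~\ref{prop:outside} (applied with $z\in(q,d)$, $d\in D\subseteq B_z$, $z\in\cl(\R^n\setminus B_z)$) gives $q\notin B_z\supseteq E$. Thus $E=D$ in typical situations, and $E$ then inherits every face of $D$. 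Concretely, let $C$ be the unit disk in $\R^2$ and let $D$ be the convex hull of the point $(0,\tfrac12)$ and the disk of radius $\tfrac12$ centred at $(0,-\tfrac12)$. Then $D\cap\partial C=\{(0,-1)\}$, both hypotheses hold (the supporting half-space at $(0,-1)$ is $\{y\geq-1\}$ for both sets), yet your $E$ equals $D$, whose two tangent segments from $(0,\tfrac12)$ are one-dimensional proper faces with relative interiors in $\relint C$. So the conclusion fails, and the ``multi-directional limit argument'' you flag as the remaining obstacle in the case $m\in\partial D\cap\relint C$ is not merely unfinished --- it is unprovable for this $E$, because that case genuinely occurs. (Separately, your step ``$m\in\relint D\Rightarrow m\in\relint E$'' is unjustified when $\dim D<\dim E$.)

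The fix is to index the cutting family over $\partial C$ rather than over $C\setminus D$, which is what the paper does: take a supporting half-space of $C$ at each $x\in D\cap\partial C$ (here hypothesis 2 guarantees these also contain $D$), and a minimum-radius ball separating $x$ from $D$ at each $x\in\partial C\setminus D$, then intersect with $\aff C$. Because the balls are only required to exclude \emph{boundary} points of $C$, each cut removes only a cap near $\partial C$, so $E$ stays strictly between $D$ and $C$ and is strictly convex away from $D\cap\partial C$. Your supporting machinery (Propositions~\ref{prop:ballsep}, \ref{prop:outside}, \ref{prop:infty} and \ref{prop:ballstoplanes}) is exactly what is needed for that version: Proposition~\ref{prop:outside} on a subsegment avoiding $D$ shows any segment of $\partial E$ inside $\partial C$ lies in $D$, and Propositions~\ref{prop:infty} and~\ref{prop:ballstoplanes} convert degenerating balls into a supporting half-space of $C$ at a point of $D\cap\partial C$ that also supports $E$ at the midpoint of the segment, forcing the segment into $\partial C$.
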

	\begin{proof} We will restrict the space we are working with to the affine hull of $C$, as this will allow us to streamline the proof and avoid unnecessary subtleties related to the construction of properly separating hyperplanes in a higher dimensional space, and such other matters. 
	The affine hull of $C$ is so identified with some Euclidean space $X$.
	In this way, no hyperplane can contain $C$ and the relative interior of $C$ is contained in the interior of every one of its supporting half-spaces.

		We construct $E$ explicitly as follows. For any $x\in D\cap \partial C$ let $H_x$ be some supporting half-space to $C$ at $x$, such that the relative interior of $C$ lies in the interior of $H_x$. 
		For $x\in \partial C\setminus D$ we let $H_x$ be a Euclidean ball of the smallest possible radius that separates $x$ from $D$, which exists by Proposition~\ref{prop:ballsep}. 
		We let
		\begin{equation}\label{eq:defE}
		E \coloneqq \aff C \cap \left(\bigcap_{x\in \partial C} H_x\right).
		\end{equation}
		If $x\in D\cap \partial C$, then $D\subseteq H_x$  holds by assumption $2$.
		If $x\in \partial C\setminus D$, then $H_x$ is a ball containing $D$. 
		Therefore, $D\subseteq E$
		
		To see that $E\subseteq C$, assume the contrary and let $y\in E\setminus C$. Take $z\in \relint D\subseteq \relint C$. The line segment connecting $y$ and $z$ must contain a point $x$ on the boundary of $C$, with $x\in (y,z)$. 
		
		Then, there are two cases for the set $H_x$.
		For $x \in D \cap \partial C$, $H_x$ is a supporting half-space of $C$ at $x$ and it contains $z$ in its interior. The segment 
		$(y,z)$ intersects the hyperplane corresponding to $H_x$ at $x$. Now, a line segment intersecting a hyperplane either intersects it in a single point or the line segment is entirely contained in it. 
		We have that $x$ is already in the hyperplane underlying $H_x$ since $H_x$ supports $C$ at $x$.
		We note that $z$ is in the interior of the half-space (i.e., $\relint H_x$), so $y$ cannot be in the same interior, since otherwise the whole 
		line segment $(y,z)$ would be in $\relint H_x$. 
		In addition, $y$ cannot be in the underlying hyperplane either, since otherwise $(y,z)$ would be entirely in it which cannot happen because $z$ is in the interior. 
		The conclusion is that $y$ does not belong to  $H_x$, so $y \not \in E$, which is a contradiction. This takes care of the first case.
		
		The second case is when $x \in \partial C \setminus D$ and $H_x$ is an Euclidean ball. 
		In this case,  we have $z \in \relint D \subseteq H_x$, so	$y\notin E$ by Proposition~\ref{prop:outside}. We conclude that our assumption was wrong and that $E\subseteq C$.
		
		It remains to show that the only proper faces of $E$ outside of $D\cap \partial C$ are extreme points. 
		In order to do so, it is enough to show that if some line segment $[u,v]$ with $u\neq v$ lies within the boundary of $E$, then it must  be a subset of $\partial C$, because this automatically implies that this lime segment is a subset of $D$ as we will check shortly. 
		Indeed, assume the contrary: if there exists a line segment $[u,v]$ that is contained in  $E\cap \partial C$, but is not contained in $D$, since $D$ is closed, there must be a nontrivial subsegment $[\tilde{u},\tilde{v}]$ of $[u,v]$ that is contained in $\partial C\setminus D$ and $\tilde u \neq \tilde v$ holds. 
		The midpoint $\tilde w \coloneqq \frac{1}{2}\tilde u + \frac{1}{2}\tilde v$ of this subsegment is in $\partial C\setminus D$, so the minimum radius ball $H_{\tilde w}$ that separates $\tilde w$ from $D$ contains $E$ by \eqref{eq:defE}.
		Since $\tilde u, \tilde v$ are in $E$, we have $\tilde u, \tilde v \in H_{\tilde w}$ 
		and $\tilde w$ is in the closure of the complement of $H_{\tilde w}$ (because $H_{\tilde w}$ is a separating ball).
		As $\tilde w \in (\tilde u, \tilde v)$, we 
		conclude that $\tilde u \not \in H_{\tilde w}$ by Proposition~\ref{prop:outside},  which contradicts the fact that $\tilde u$ is in $E \subseteq H_{\tilde w}$.

		
		Suppose that $[u,v]$ is indeed a segment entirely contained in the boundary of $E$. We will show that it is contained in $\partial C$ and, therefore, in $D$ too. 
		The midpoint $w\coloneqq \frac 1 2 u + \frac 1 2 v$ also lies on the boundary of $E$. In particular, there must be a sequence $(w_k) \subset \aff C$ outside of $E$  converging to $w$. 
		By the construction of $E$ this in turn means that there is a sequence $(x_k) \subseteq \partial C$ such that $w_k\notin H_k \coloneqq H_{x_k}$. Since $C$ is compact, we can assume that $x_k\to x \in \partial C$. 
		Moreover, taking a subsequence if necessary, we can assume that $H_k$ is either a sequence of Euclidean balls or a sequence of half-spaces. 
		We will discuss each case separately and 
		show that either way we have
		\begin{enumerate}[$(a)$]
			\item $x \in D \cap \partial C$;
			\item there exists a half-space $H$ that supports $C$ at $x$  and $E$ at $w$.
		\end{enumerate}
		Before we prove $(a)$ and $(b)$, let us check that they allow us to complete the proof.
		For the sake of obtaining a contradiction, suppose 
		that there exists $\bar w \in [u,v] \cap \relint C$ and denote the hyperplane underlying $H$ by $H^0$.
		Then, \eqref{eq:risegm} implies $\relint\, [u,v] = (u,v)$ is entirely contained in $\relint C$.
		In particular, $w \in \relint C$ so $w$ must be in the interior of $H$, which contradicts the fact that $H$ supports $E$ at $w$ (i.e., $w \in H^0$).
		Therefore, $[u,v]$ must be entirely contained in $\partial C$, which concludes the proof.
	
		Finally, let us check that $(a)$ and $(b)$ indeed hold.
		First, we consider the case where $H_{k}$ is a sequence of half-spaces supporting $C$ at $x_k$, i.e., we have $H_{k} = \{z \mid \inProd{z-x_k}{p_k} \geq 0 \}$, for a sequence $(p_k)$ of normals.
		By definition, $x_k \in D \cap \partial C$  holds, so $x \in D \cap \partial C$, which proves $(a)$.
		The  normals $p_k$ can be assumed to be on the unit sphere, and hence compact and possessing a limit point $p$. As $x_k \to x$ and $w_k \not \in H_{k}$, $H \coloneqq  \{z \mid \inProd{z-x}{p} \geq 0 \}$ is a supporting hyperplane of $C$ at $x$ satisfying $\inProd{w-x}{p} \leq 0 $. However, 
		$w \in E \subseteq C$, so, in fact, 
		$w$ is in the boundary of $H$.
		Therefore, $H$ is a supporting hyperplane of $E$ at $w$ too, which proves item $(b)$.
		
		
		 Next, we consider the case where we have a sequence $H_k$ of balls with center $c_k$ and radius $r_k$. We have $w_k \not \in H_{k}$ and 
		 $[u,v] \subseteq H_{k}$ by \eqref{eq:defE}, since $[u,v] \subseteq E$.
		 Therefore, the centers depart to infinity by Proposition~\ref{prop:infty}. Furthermore, since $D$ is compact and these balls must all contain $D$, we have
		 \begin{equation}\label{eq:rk}
		 r_k \to \infty
		 \end{equation}
		 as well.		  
		  Without loss of generality we can assume that $c_k/\|c_k\|$ converges to some unit vector $p$. Moreover, the sequence $(w_k)$ is such that $w_k\notin H_k$ and $w_k\to w \in E$.
		Invoking Proposition~\ref{prop:ballstoplanes} with $(c_k)$, $(r_k)$, $(w_k)$ and $E$, we deduce that the half-space 
		\begin{equation}\label{eq:H}
		H \coloneqq \{z \mid \langle z-w,p\rangle \geq 0\}
		\end{equation}
		contains $E$ and, therefore, contains $D$ as well. Also, since $x_k$ lies in the closure of the complement of the ball $H_k$ for each $k$, applying the second part of Proposition~\ref{prop:ballstoplanes} to $y_k=x_k$ we also deduce that
		\begin{equation}\label{eq:xw}
		\inProd{x-w}{p}\leq 0.
		\end{equation}
		Next, we will verify that  $x\in D \cap \partial C$.
		 Assume the contrary, i.e., $x\in \partial C\setminus D$. Let $q\in \relint D$. Since $D$ is closed, there exists some $\hat q\in (x,q)$ such that $\hat q \notin D$. 
		There exists a Euclidean ball $B$ separating this point $\hat q$ from $D$ by Proposition~\ref{prop:ballsep}.		 
		Since $q \in B$, we conclude that $x$ lies outside of that ball by Proposition~\ref{prop:outside}.
		In particular, there is a small neighbourhood $V$ of $x$ that is also outside of $B$.
		For all $v \in V$, the same $B$ is a closed Euclidean ball that separates $D$ and $v$, so 
		the radius of the smallest Euclidean ball
		separating $v$ from $D$ is upper bounded by the radius of $B$. 
		Hence, the radius of the supporting balls $H_{x_k}$
		cannot go to infinity, which contradicts \eqref{eq:rk}. 
		Our assumption was therefore wrong and we have $x\in D\cap \partial C $, which shows item $(a)$.
		
		Knowing that $x \in D \subset H$, \eqref{eq:xw} implies that $H$ supports $D$ at $x$.
		By assumption~2., we conclude that $H$ also supports $C$ at $x$.		
		We have shown earlier that $E\subseteq C$, so we must have $E\subset H$ too. Since $w\in \partial E$, we conclude that $H$ is a supporting hyperplane to $E$ at $w$ and shows that item~$(b)$ holds. 
	\end{proof}
	
	\begin{proof}[Proof of Lemma~\ref{lem:refinement}] Let $F$, $C$ and ${C'}$  be as in the statement, and suppose that $H$ is some hyperplane that exposes $F$ (as a face of $C$). By Proposition~\ref{prop:approxwtg} there exists a compact convex set $G$ such that ${C'}\subseteq G \subset C$ and  $F= \partial C\cap G$, and such that $G$ and $C$ share supporting hyperplanes at all points of $F$. 
	We can therefore apply Proposition~\ref{prop:sandwich} to establish the existence of a compact convex set $E$ such that ${C'}\subseteq G\subseteq E\subseteq C$,  $F = \partial C\cap G \subseteq \partial C\cap E $, and such that the only faces of $E$ are extreme points and subfaces of $F$. Moreover since $F$ is an exposed face of $C$, and $F\subseteq E\subseteq C$, then $F$ must be an exposed face of $E$.
	\end{proof}

	\begin{proof}[Proof of Theorem~\ref{thm:amalgamation}]
		Under the assumptions of the theorem we invoke Lemma~\ref{lem:DclosebdC} to obtain a compact convex set $C'$ such that 		
		\begin{equation}\label{eq:distrel}
		\dist(x,{C'}) \leq \varphi(x), \; \forall\, x\in C,  
		\end{equation}
		$\relint {C'}\subseteq \relint C$ and $\partial C\cap {C'} = F$. The set $C$, the face $F$ and the set $C'$ satisfy the assumptions of Lemma~\ref{lem:refinement}. Therefore there exists some compact convex set $D$ such that 
		\begin{enumerate}
			\item ${C'}\subseteq D \subseteq C$,
			\item $F\subseteq\partial C\cap D$, and $F$ is a face of $D$, 
			\item every face of $D$ is either a subface of $F$ or an extreme point. 
		\end{enumerate}
	    Since $C'\subseteq D \subseteq C$ from \eqref{eq:distrel} it follows that 
	    \[
		\dist (x,D)\leq \dist(x,{C'}) \leq \varphi(x), \; \forall\, x\in C. 
	    \]
	\end{proof}
	
	\section{Intersections of projectionally exposed cones}\label{sec:counterexample}
	In this section, our main goal is to prove Theorem~\ref{thm:main} and construct two p-exposed cones whose intersection is not p-exposed.
	These cones will be constructed through several steps and we will invoke the results in Sections~\ref{sec:facefix} and \ref{sec:sandwich} along the way.
	
	\subsection{The key construction}\label{sec:construction}
	
	As explained in the introduction, we are going to build a simplified construction to explain how our example `works', before modifying it slightly to obtain the actual sets whose existence proves Theorem~\ref{thm:main}. 
	
	We construct the five-dimensional cones by working on their compact convex four-dimensional slices. 
	For each set, we build its facial structure in a three-dimensional space, then curve this three-dimensional space appropriately to build the boundary of the corresponding four-dimensional compact convex set, by adding a fourth coordinate. 
	We then lift these sets as cones to a five-dimensional space. 
	We use the coordinates $(l,x,y,z,s)$, where $(x,y,z)$ are the three-dimensional coordinates of the three-dimensional face design, $s$ is the `slap on' coordinate that allows us to convert the original three-dimensional structure into the boundary of the four-dimensional set, and the last coordinate $l$ is the lifting coordinate that we use to lift our compact convex set to the conic setting. 
	
	Let $\alpha, \beta, \gamma:[0,1]\to \R^5$ be defined as follows,
	\begin{equation}
		\label{eq:abg}
		\alpha(t) \coloneqq \begin{pmatrix}
			1\\ 0 \\ t^2 \\ t \\ t^3 
		\end{pmatrix}, \;
		\beta(t) \coloneqq \begin{pmatrix}
			1\\ \frac 1 2 t \\ t^3 \\ -\frac{1}{2} t\\ t^3 
		\end{pmatrix}, \; 
		\gamma(t) \coloneqq \begin{pmatrix}
			1\\- \frac 1 2 t \\ t^3 \\ -\frac{1}{2} t\\ t^3  
		\end{pmatrix}.
	\end{equation}
	Let $D$ be the disk
	\begin{equation}\label{eq:defD}
		D \coloneqq \{(1,x,y,0,0) \mid x^2+(y-1)^2 \leq 1\},    
	\end{equation}
	and let 
	\begin{equation}\label{eq:defK}
		\stdCone \coloneqq \cone ( D\cup \alpha([0,1])\cup\beta([0,1])\cup\gamma([0,1])).    
	\end{equation}
	We will show that 
	\begin{equation}\label{eq:conefd}
	F \coloneqq \cone D
	\end{equation}
	is an exposed face of $\stdCone$ and that it is not projectionally exposed. The following result implies that $F$ is an exposed face of 
	$\stdCone$ (by considering the special case $\hat{D} = D$). 
\begin{lemma}
	\label{lem:exposed-general}
	Let $S\subseteq \R^2$ be a convex set containing the origin. Let $\hat{D} \coloneqq \{(1,x,y,0,0)\in \R^5\,|\, (x,y)\in S\}$. Let $\alpha, \beta, \gamma$ be defined as in~\eqref{eq:abg}.
	Then $\hat{F} \coloneqq \cone \hat{D}$ is an exposed face of 
	$\hat{\stdCone} \coloneqq \cone(\hat{D} \cup \alpha([0,1])\cup \beta([0,1]) \cup\gamma([0,1]))$.
\end{lemma}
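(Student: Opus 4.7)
The plan is to produce a single linear functional that vanishes on $\hat{F}$ and is strictly positive on all other generators of $\hat{\stdCone}$, hence exposes $\hat{F}$ as a face. Looking at the coordinates $(l,x,y,z,s)$, observe that the fifth coordinate $s$ is zero on every generator of $\hat{D}$ and strictly positive on $\alpha(t), \beta(t), \gamma(t)$ for $t\in(0,1]$, while $\alpha(0)=\beta(0)=\gamma(0)=(1,0,0,0,0)$, which lies in $\hat{D}$ precisely because $(0,0)\in S$ by assumption. This suggests taking $e_5 = (0,0,0,0,1)^\top$ as the exposing vector.

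First, I would verify that $e_5 \in \hat{\stdCone}^*$. Since $\hat{\stdCone}$ is the conic hull of $\hat{D}\cup\alpha([0,1])\cup\beta([0,1])\cup\gamma([0,1])$, its dual is characterized by nonnegativity on these generators, and the $s$-coordinate of each generator is either $0$ (on $\hat{D}$) or $t^3\geq 0$ (on the three curves). Second, the inclusion $\hat{F}\subseteq \hat{\stdCone}\cap\{e_5\}^\perp$ is immediate: every point of $\hat{D}$ has $s=0$, and this is preserved under conic combinations.

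The reverse inclusion $\hat{\stdCone}\cap\{e_5\}^\perp \subseteq \hat{F}$ is the only step requiring a short argument. Any $p\in\hat{\stdCone}$ can be written as $p = \sum_i \lambda_i p_i$ with $\lambda_i\geq 0$ and each $p_i$ among the generators. If $\inProd{e_5}{p}=0$, then $\sum_i \lambda_i (p_i)_5 = 0$; since all terms are nonnegative, every index $i$ with $(p_i)_5 > 0$ must carry weight $\lambda_i = 0$. The surviving generators are therefore points of $\hat{D}$ together with possibly $\alpha(0), \beta(0), \gamma(0)$, all equal to $(1,0,0,0,0)$; this point belongs to $\hat{D}$ because $S\ni(0,0)$. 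Thus $p$ is a conic combination of elements of $\hat{D}$, i.e., $p\in\cone \hat{D} = \hat{F}$.

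There is no real obstacle here — the construction of the three curves $\alpha,\beta,\gamma$ was engineered so that the $s$-coordinate acts as a common exposing functional, and the only mildly subtle point is the absorption of the apex $\alpha(0)=\beta(0)=\gamma(0)$ into $\hat{D}$, which is guaranteed precisely by the hypothesis $0\in S$ that appears in the lemma statement.
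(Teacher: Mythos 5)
Your proposal is correct and follows essentially the same route as the paper: the exposing functional is the fifth coordinate vector $u=(0,0,0,0,1)$, the curves satisfy $\inProd{u}{\alpha(t)}=\inProd{u}{\beta(t)}=\inProd{u}{\gamma(t)}=t^3$, and the reverse inclusion is obtained by decomposing a point of $\hat{\stdCone}$ as a conic combination of generators and noting that only generators with vanishing $s$-coordinate (all lying in $\hat{D}$, including the common point $\alpha(0)=\beta(0)=\gamma(0)=(1,0,0,0,0)$, which is in $\hat{D}$ since $0\in S$) can carry positive weight. No substantive differences.
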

\begin{proof}
	Let $u = (0,0,0,0,1)$. We will show that $\hat{F} = \{v\in \hat{\stdCone} \mid \inProd{u}{v} = 0\}$. 
	The inclusion $\subseteq$ holds because $\langle u,v\rangle = 0$ for all $v\in \hat{F}$.
	For the reverse inclusion, we note that $\langle u,\alpha(t)\rangle = \langle u,\beta(t)\rangle = \langle u,\gamma(t)\rangle = t^3$ for all $t\in [0,1]$. Therefore if 
	$v\in \alpha([0,1])$ then $\langle u,v\rangle = 0$ if and only if $v = \alpha(0)\in \hat{D}$.
	Similarly if $v\in \beta([0,1])$ then $\langle u,v\rangle = 0$ if and only if $v = \beta(0)\in \hat{D}$ and if $v\in \gamma([0,1])$ then $\langle u,v\rangle = 0$ if and only if $v = \gamma(0)\in \hat{D}$. By decomposing an arbitrary element $v\in \hat{\stdCone}$ as a conic combination of elements of $\hat{D}$, $\alpha([0,1])$, $\beta([0,1])$ and $\gamma([0,1])$, it follows that $\langle u,v\rangle = 0$ if and only if $v\in \cone\hat{D} = \hat{F}$. 
\end{proof}
%

The following result, in the special case $\delta_i = 0$ for $i=1,2,3$,
implies that $F$ is not a projectionally exposed face of $\stdCone$, since it rules out the existence 
of an idempotent linear map $P$  such that $P(\stdCone) = F$. It is also general enough to  
show that $F$ is not a projectionally exposed face of a certain modification of $\stdCone$ that will be 
introduced in Section~\ref{sec:proof}.
	\begin{proposition}
		\label{prop:not-pexposed}
		Let $\alpha,\beta,\gamma$ be defined as in~\eqref{eq:abg}, $D$ as in \eqref{eq:defD} and let $F$ as in \eqref{eq:conefd}.
		For $i=1,2,3$, let $\delta_i:[0,1]\rightarrow \{0\}\times \R^4$ satisfy $\|\delta_i(t)\|\leq t^3$ for all $t\in [0,1]$. Then there does not exist an idempotent linear map $P:\R^5\rightarrow \R^5$ such that $P(F) = F$, $P(\R^5) \subseteq \lspan F$,  
	$P(\alpha(t) + \delta_1(t)) \subseteq F$ for all $t\in [0,1]$,
		$P(\beta(t) + \delta_2(t)) \subseteq F$ for all $t\in [0,1]$, and
	$P(\gamma(t)+\delta_3(t))\subseteq F$ for all $t\in [0,1]$.
	\end{proposition}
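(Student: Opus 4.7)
The plan is to exploit the structure of $F$ together with the three hypotheses on $P$ to pin the matrix of $P$ down to a six-parameter family, and then to derive contradictory constraints on these parameters by Taylor-expanding the condition $P(\mu(t) + \delta_i(t)) \in F$ around $t = 0$ for each curve $\mu \in \{\alpha, \beta, \gamma\}$.

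First I would observe that $\lspan F$ equals the coordinate subspace $\R^3 \times \{0\} \times \{0\}$, since the linear hull of $D$ is this $3$-dimensional subspace. Because $P(F) = F$ spans $\lspan F$, the image $P(\R^5)$ contains $F$; combined with $P(\R^5)\subseteq \lspan F$, this forces $P(\R^5) = \lspan F$. Idempotency then implies that $P$ acts as the identity on $\lspan F$, so in coordinates $(l,x,y,z,s)$ the map must take the form
\[
P(l,x,y,z,s) = (l + a_4 z + a_5 s,\ x + b_4 z + b_5 s,\ y + c_4 z + c_5 s,\ 0,\ 0)
\]
for some scalars $a_4, a_5, b_4, b_5, c_4, c_5 \in \R$. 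A point $(\ell, X, Y, 0, 0)$ with $\ell > 0$ lies in $F = \cone D$ exactly when $\Phi(\ell,X,Y) := X^2 + Y^2 - 2\ell Y \leq 0$.

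Next, for each curve $\mu$ I would write $P\mu(t) = (\ell_\mu(t), X_\mu(t), Y_\mu(t), 0, 0)$ and expand $\Phi_\mu(t) := \Phi(\ell_\mu(t), X_\mu(t), Y_\mu(t))$ in powers of $t$. A direct computation yields that the coefficient of $t$ in $\Phi_\mu(t)$ is $-2c_4$ for $\mu = \alpha$ and $c_4$ for $\mu \in \{\beta, \gamma\}$. Since $\ell_\mu(t) = 1 + O(t)$ and $X_\mu(t), Y_\mu(t) = O(t)$, while $\|P\delta_i(t)\| = O(t^3)$, the perturbation contributes only $O(t^3)$ to $\Phi$ and does not affect the $t$- or $t^2$-coefficients. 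Requiring the perturbed $\Phi$ to be nonpositive for all small $t > 0$ therefore forces $c_4 \geq 0$ from $\alpha$ and $c_4 \leq 0$ from $\beta$, hence $c_4 = 0$.

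Setting $c_4 = 0$ and recomputing, the $t^2$-coefficient of $\Phi_\mu(t)$ simplifies dramatically: for $\beta$ it becomes $(1-b_4)^2/4$ and for $\gamma$ it becomes $(1+b_4)^2/4$. Nonpositivity then demands $b_4 = 1$ and $b_4 = -1$ simultaneously, which is impossible, ruling out any such $P$. The main piece of bookkeeping is verifying that the $O(t^3)$ perturbation term cannot contaminate the leading two orders of $\Phi_\mu$; this is routine given that the cross terms in $\Phi$ involving a $P\delta_i$ factor are of order at most $1 \cdot t^3 = t^3$ and $t \cdot t^3 = t^4$ from the orders of $\ell_\mu$, $X_\mu$, and $Y_\mu$ listed above.
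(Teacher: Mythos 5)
Your proof is correct and follows essentially the same route as the paper's: reduce $P$ to a six-parameter matrix fixing $\lspan F$, substitute the perturbed curves into the quadratic inequality defining $F$, check that the $O(t^3)$ perturbation cannot affect the $t$- and $t^2$-coefficients, and extract contradictory sign conditions (your $c_4, b_4$ are the paper's $a_3, a_2$). The only cosmetic difference is the endgame: you force $b_4 = 1$ and $b_4 = -1$ from the $\beta$- and $\gamma$-expansions separately, whereas the paper adds the two inequalities to obtain the impossible condition $-\tfrac{1}{2}(1+a_2^2) \geq 0$.
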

	\begin{proof}
	We will assume that such a map $P$ exists, and seek a contradiction. 
	We note that $\lspan F = \R^3\times \{(0,0)\}$. Hence if $P:\R^5\rightarrow \R^5$ is idempotent and satisfies $P(F) = F$ then $P$ must fix $\lspan F$. Since $P(\R^5)\subseteq \lspan F$, 
		in coordinates with respect to the standard basis, we must then have that 
	\begin{equation*}
		P = \begin{pmatrix}
			1 & 0 & 0 & a_1 & b_1 \\
			0 & 1 & 0 & a_2 & b_2 \\
			0 & 0 & 1 & a_3 & b_3 \\
			0 & 0 & 0 & 0 & 0 \\
			0 & 0 & 0 & 0 & 0
		\end{pmatrix}   
	\end{equation*}
		for some real numbers $a_1,a_2,a_3,b_1,b_2,b_3$. 
	Now 
	\[
	F = \cone D = \{(l,x,y,0,0) \mid x^2+(y-l)^2 \leq l^2,\, l\geq 0\} = \{(l,x,y,0,0) \mid
	2 y l - x^2 - y^2 \geq 0, \,l\geq 0\}.
	\]
	Since $P(\alpha(t)+\delta_1(t))$ and $P(\beta(t)+\delta_2(t))$ and $P(\gamma(t)+\delta_3(t))$ lie in $F$ for all $t\in [0,1]$, it follows that $P(\alpha(t)+\delta_1(t))$, $P(\beta(t)+\delta_2(t))$ and $P(\beta(t)+\delta_3(t))$ must each satisfy the quadratic inequality $2yl-x^2-y^2 \geq 0$. 
		Substituting $P(\alpha(t)+\delta_1(t))$, $P(\beta(t)+\delta_2(t))$ and $P(\gamma(t)+\delta_3(t))$ into $2 y l - x^2 - y^2\geq 0$, grouping terms by degree in $\delta_i(t)$, and grouping the terms that do not involve $\delta_i(t)$ by powers of $t$, gives the inequalities
	\begin{align}\label{eq:ContradictoryInequalities}
		2 a_3 t + (2 - a_2^2 + 2 a_1 a_3 - a_3^2) t^2 +  p_1(t) t^3 + q_1(t)^\intercal\delta_1(t) + \delta_1(t)^\intercal R_1(t) \delta_1(t)& \geq 0,\notag\\
		- a_3 t + \frac{1}{4}(-1 + 2 a_2 - a_2^2 + 2  a_1 a_3 - a_3^2) t^2+  p_2(t) t^3+ q_2(t)^\intercal\delta_2(t)+ \delta_2(t)^\intercal R_2(t) \delta_2(t)& \geq 0,\\
		- a_3 t + \frac 1 4(-1 - 2 a_2 - a_2^2 + 2  a_1 a_3 - a_3^2) t^2+  p_3(t) t^3+ q_3(t)^\intercal\delta_3(t)+ \delta_3(t)^\intercal R_3(t) \delta_3(t)& \geq 0,\notag
	\end{align}
		where $p_1,p_2,p_3$ are polynomials, $q_1,q_2,q_3$ are polynomial maps from
		$[0,1]$ to $\R^5$ and $R_1,R_2,R_3$ are polyomial maps from $[0,1]$ to
		$5\times 5$ symmetric matrices. For each $i=1,2,3$, there exists a positive constant $M_i$ such that  
		\[ q_i(t)^\intercal \delta_i(t) + \delta_i(t)^\intercal R_i(t)\delta_i(t) \leq \|\delta_i(t)\|\sup_{t\in [0,1]}\|q_i(t)\| + \|\delta_i(t)\|^2\sup_{t\in [0,1]}\|R_i(t)\| \leq M_i\,t^3,\]
		where we have used the fact that $\|\delta_i(t)\|\leq t^3$ for all $t\in [0,1]$, and the fact that $\|q_i(t)\|$ and $\|R_i(t)\|$ are continuous.
		Therefore, the
		inequalities~\eqref{eq:ContradictoryInequalities} imply
		\begin{align}\label{eq:ContradictoryInequalities2}
			2 a_3 t + (2 - a_2^2 + 2 a_1 a_3 - a_3^2) t^2 +  t^3(p_1(t) + M_1)& \geq 0,\notag\\
			- a_3 t + \frac{1}{4}(-1 + 2 a_2 - a_2^2 + 2  a_1 a_3 - a_3^2) t^2+  t^3(p_2(t) + M_2)& \geq 0,\\
			- a_3 t + \frac 1 4(-1 - 2 a_2 - a_2^2 + 2  a_1 a_3 - a_3^2) t^2+   t^3(p_3(t)+ M_3)& \geq 0.\notag
	\end{align}
	As $t$ approaches zero, the sign of the lowest order term in $t$ determines the sign of the whole expression. Therefore the first inequality of~\eqref{eq:ContradictoryInequalities2} implies that $a_3 \geq 0$ and the remaining two inequalities of~\eqref{eq:ContradictoryInequalities2} imply that $a_3 \leq 0$. Therefore $a_3=0$. 
	Substituting $a_3=0$ and taking the sum of the last two inequalities gives
	\begin{equation*}
		- \frac{1}{2}(1 + a_2^2)  t^2+ t^3( p_2(t) +p_3(t)  +M_2+M_3)  \geq 0.
	\end{equation*}
		On the one hand, the leading coefficient (i.e., the coefficient of $t^2$) must be nonnegative. On the other hand $-(1+a_2^2)/2 \leq -1/2 < 0$ for all $a_2$, leading to a contradiction.
		Hence there does not exist a projection $P$ with the desired properties.
	\end{proof}

	Enlarging both the face $F$ and the cone $\stdCone$ in a specific way makes the corresponding enlarged face projectionally exposed with respect to the enlarged cone. Let 
	\begin{align}\label{eq:D1}
		D_1 &\coloneqq  \{(1,x,y,0,0) \mid x^3-(y-1)^3 \leq 1, x\geq 0,\; 1\geq y\geq 0\},\\
		\label{eq:D2}
		D_2 &\coloneqq \{(1,x,y,0,0) \mid -x^3-(y-1)^3 \leq 1, x\leq 0,\;1\geq y\geq 0\}.   
	\end{align}
	Let $D$ be the disk defined in~\eqref{eq:defD} and let
	\begin{equation}
		\label{eq:Kidef}
		\stdCone_i \coloneqq \cone ( D \cup D_i \cup \alpha[0,1] \cup \beta[0,1] \cup \gamma[0,1])\quad\textup{for $i\in \{1,2\}$}.
	\end{equation}
	Lemma~\ref{lem:exposed-general}, with the choice $\hat{D} = D_i \cup D$, tells us that
	\begin{equation}\label{eq:def_fi}
	F_i \coloneqq \cone (D_i \cup D)
	\end{equation}
	is an exposed face of $\stdCone_i$, for $i\in\{1,2\}$. The next result tells us that $F_i$ is actually a projectionally exposed face of $\stdCone_i$ for $i\in \{1,2\}$. 
	\begin{proposition}
		\label{prop:KiPe}
		Let $D$ be the disk defined in~\eqref{eq:defD},  let $D_1$ and $D_2$ be defined in~\eqref{eq:D1} and~\eqref{eq:D2} respectively, and let $\stdCone_i$  be defined in~\eqref{eq:Kidef} respectively, for $i\in \{1,2\}$. Then, for $i\in \{1,2\}$, $F_i$ as defined in \eqref{eq:def_fi} is a projectionally exposed face of $\stdCone_i$. 
	\end{proposition}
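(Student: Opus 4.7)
The plan is to exhibit explicit idempotent linear maps $P_1$ and $P_2$ and verify directly that $P_i(\stdCone_i) = F_i$ for $i=1,2$. As noted in the proof of Proposition~\ref{prop:not-pexposed}, since $\lspan F_i = \R^3\times\{0\}^2$, any candidate idempotent $P$ with $P(\R^5)\subseteq \lspan F_i$ and $P|_{F_i}=\mathrm{id}$ must take the block form with a $3\times 3$ identity in the top-left, zero bottom two rows, and a free $3\times 2$ block. The obstruction identified in Proposition~\ref{prop:not-pexposed} came from trying to force both $\beta(t)$ and $\gamma(t)$ into the disk $D$, where the constraint $y\gtrsim x^2/2$ is incompatible with the $y$-coordinates $t^3$ of $\beta(t),\gamma(t)$. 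Enlarging $D$ by $D_i$ introduces extra room on one half-plane, where only $y\gtrsim x^3/3$ is required, which can absorb the relevant curve.

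Concretely, I propose
\[
P_1(l,x,y,z,s) \coloneqq (l,\, x-z,\, y,\, 0,\, 0),\qquad P_2(l,x,y,z,s) \coloneqq (l,\, x+z,\, y,\, 0,\, 0),
\]
that is, the only nonzero off-identity entry of the $3\times 2$ block is the coefficient of $z$ in the $x$-component, equal to $\mp 1$. Both maps are visibly idempotent (their image lies in $\{z=s=0\}$, which they fix pointwise), map $\R^5$ into $\lspan F_i$, and act as the identity on $F_i$; hence $P_i(F_i)=F_i \subseteq P_i(\stdCone_i)$. The reverse inclusion $P_i(\stdCone_i)\subseteq F_i$ follows by conic linearity once I verify that each generating family $D$, $D_i$, $\alpha([0,1])$, $\beta([0,1])$, $\gamma([0,1])$ is sent into $F_i$. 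The inclusions $P_i(D)\subseteq F_i$ and $P_i(D_i)\subseteq F_i$ are immediate because $P_i$ fixes $\lspan F_i$ pointwise.

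The only computation to do is on the three arcs. Direct substitution for $i=1$ gives
\[
P_1(\alpha(t))=(1,-t,t^2,0,0),\qquad P_1(\beta(t))=(1,t,t^3,0,0),\qquad P_1(\gamma(t))=(1,0,t^3,0,0),
\]
and the membership conditions reduce to the elementary polynomial inequalities $t^4\leq t^2$ (so $(-t,t^2)\in D$), together with $t\geq 0$, $t^3\in[0,1]$ and $(1-t^3)(2-t^3)\geq 0$ (so $(t,t^3)\in D_1$), and $(t^3-1)^2\leq 1$ (so $(0,t^3)\in D$), all valid on $[0,1]$. The case $i=2$ is symmetric: $P_2$ sends $\alpha(t)\mapsto(1,t,t^2,0,0)\in D$, $\beta(t)\mapsto(1,0,t^3,0,0)\in D$, and $\gamma(t)\mapsto(1,-t,t^3,0,0)\in D_2$, each verified by the same set of inequalities.

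I anticipate no serious obstacle: the real design choice is a single scalar $a_2=\mp 1$ shearing the $z$-direction onto the positive (resp.\ negative) $x$-axis of the $(l,x,y)$-slice, and the rest is explicit polynomial checking. In particular, writing down $P_1,P_2$ and these polynomial verifications directly yields the required idempotent projections onto $F_1$ and $F_2$.
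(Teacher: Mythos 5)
Your proposal is correct and matches the paper's proof essentially exactly: the paper uses the same idempotent $P_1(l,x,y,z,s)=(l,x-z,y,0,0)$ and verifies the same three polynomial memberships $P_1(\alpha(t))\in D$, $P_1(\gamma(t))\in D$, $P_1(\beta(t))\in D_1$. The only cosmetic difference is that the paper handles $i=2$ by the reflection symmetry $x\mapsto -x$ rather than writing out $P_2$ explicitly.
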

	\begin{proof}
	Our construction is symmetric with respect to the reflection in the hyperplane orthogonal to the $x$-axis. This transformation preserves $D$ and $\alpha([0,1])$, exchanges $\beta([0,1])$ and $\gamma([0,1])$ and exchanges $D_1$ and $D_2$. Therefore this reflection preserves $\stdCone$ and swaps $\stdCone_1$ and $\stdCone_2$. Hence $F_1$ is a projectionally exposed face of $\stdCone_1$ if and only if $F_2$ is a projectionally exposed face of $\stdCone_2$. So we will only show that the face $F_1$ is a projectionally exposed face of $\stdCone_1$. Let $P_1:\R^5\to\R^5$ be the idempotent linear map with the matrix representation
	\[
	P_1 = \begin{pmatrix}
		1 & 0 & 0 & 0 & 0 \\
		0 & 1 & 0 & -1 & 0 \\
		0 & 0 & 1 & 0 & 0 \\
		0 & 0 & 0 & 0 & 0 \\
		0 & 0 & 0 & 0 & 0
	\end{pmatrix}.
	\]
	Figure~\ref{fig:projections} shows the images of  
	the curves $\alpha, \beta$ and $\delta$ under $P_1$. 
	\begin{figure}[t]
		\centering
		\includegraphics[width=0.45\textwidth]{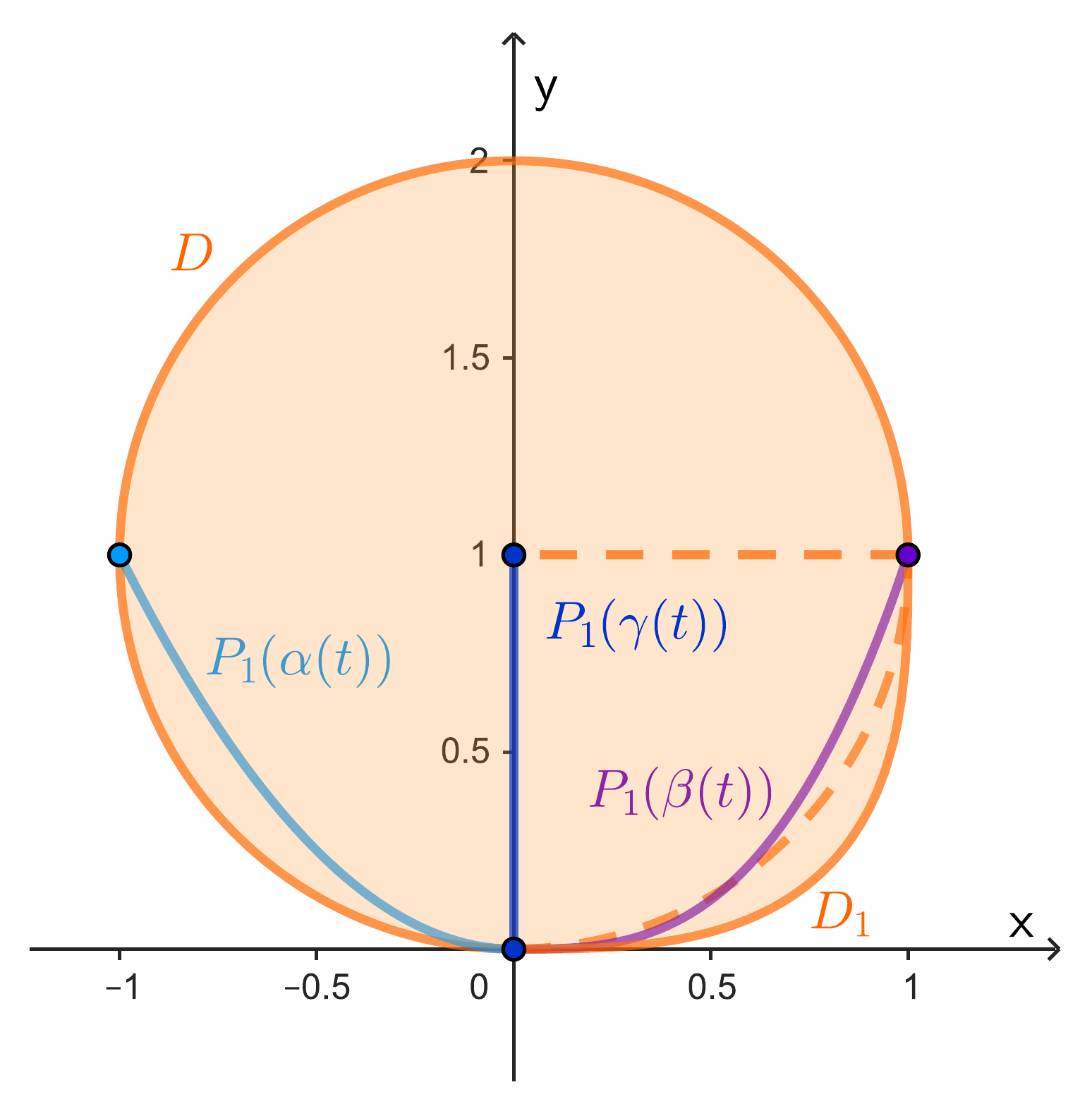}
  		\begin{overpic}[width=0.45\textwidth,
	]{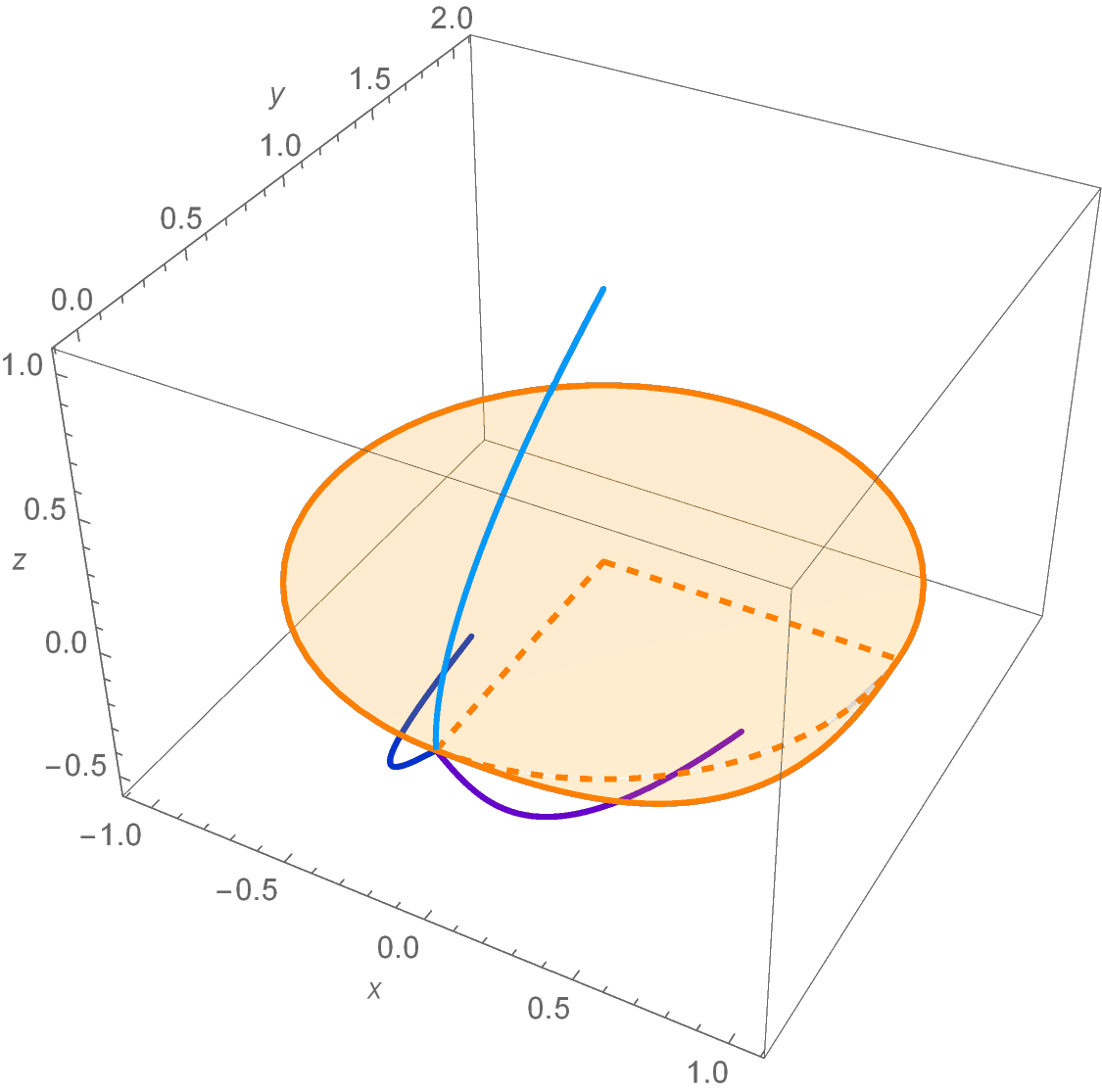} 
	\put(47,70){$\color{lightblue} \alpha$}    
	\put(28,27){$\color{darkblue} \gamma $}    
	\put(52,18){$\color{purple} \beta$}    
\end{overpic}
\caption{Left: the projections of the curves $\alpha,\beta$ and $\gamma$ onto the slice of the face $F_1$ of $\stdCone_1$; right: the $xyz$-projections of these curves, along with the projection of the slice of the face $F_1$ of the cone $\stdCone_1$}
		\label{fig:projections}
	\end{figure}
	Let 
	\[
	(l(t),x(t),y(t),0,0) := P_1(\alpha(t)) = (1, -t, t^2, 0, 0). 
	\]
	Then 
	\[
	2 y(t) l(t) - x^2 (t) - y^2(t) =  2 t^2 - t^2 - t^4 = t^2 (1-t^2)\geq 0,\quad \forall t\in [0,1].  
	\]
	Hence, $P_1(\alpha(t))\in D\subset D \cup D_1$ for all $t\in [0,1]$. 
	Likewise, letting 
	\[
	(l(t),x(t),y(t),0,0) := P_1(\gamma(t)) = (1, 0, t^3, 0, 0), 
	\]
	we have 
	\[
	2 y(t) l(t) - x^2 (t) - y^2(t) = 2 t^3 - t^6 = t^3(2-t^3)\geq 0\quad \forall t\in [0,1],  
	\]
	and hence  $P_1(\gamma(t))\in D\subset D_1$ for all $t\in [0,1]$. 
	Finally, for
	\[
	(l(t),x(t),y(t),0,0) := P_1(\beta(t)) = (1, t, t^3, 0, 0), 
	\]
	we have 
	\[
	x^3 (t) - (y(t)-1)^3 -1  =	(1-t) t^3 \left(t^2+t+1\right) \left(t^3-2\right)\leq 0 \quad \forall t \in [0,1],
	\]
	hence we conclude that $P_1(\gamma(t))\in D_1$ for all $t\in [0,1]$. 
	
	Since $P_1$ maps the three curves that span the cone $\stdCone_1$ onto the face $F_1$, it must map the rest of the cone to $F_1$. We conclude that $F_1$ is a projectionally exposed face of $\stdCone_1$.
	\end{proof}
	As discussed previously, to finish the proof of Theorem~\ref{thm:main}, we need to show that the cones $\stdCone_1$ and $\stdCone_2$ are fully projectionally exposed, that is, every face of $\stdCone_1$ and every face of $\stdCone_2$ is projectionally exposed. Instead of proving this result for the cones that we have constructed in this section, we will modify $\stdCone$, $\stdCone_1$ and $\stdCone_2$ slightly to preserve the projectional exposure properties with respect to the faces $F$, $F_1$ and $F_2$, and at the same time modify the boundaries of the cones in such a way that (for $i\in \{1,2\}$) $\stdCone_i$ only has extreme rays and $F_i$ (and its subfaces) as faces. That way the remaining faces of the cones $\stdCone_1$ and $\stdCone_2$ are automatically projectionally exposed.

We observe that the intersection $F_1 \cap F_2$ is $F$ which is geometrically clear from Figure~\ref{fig:projections}, but we will check this formally.
\begin{proposition}\label{prop:f1f2}
	Let $F_i$ be the convex cone defined in \eqref{eq:def_fi}, for $i \in \{1,2\}$. Then, $F_1 \cap F_2 = F$ holds, where $F$ is defined in \eqref{eq:conefd}.
\end{proposition}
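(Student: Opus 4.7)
The plan is to reduce the claim to a two-dimensional separation problem by slicing at the affine hyperplane $\{l=1\}$, and then to exploit the fact that $D_1$ and $D_2$ sit on opposite sides of the $y$-axis. The inclusion $F \subseteq F_1 \cap F_2$ is immediate since $D \subseteq D \cup D_i$ for $i=1,2$ and the conic hull is monotone.

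For the reverse inclusion, I first observe that $D$, $D_1$ and $D_2$ all lie in the affine hyperplane $\{l=1\}$ and that the only element of $F$, $F_1$ or $F_2$ with $l=0$ is the origin. Consequently each of these cones equals $\{0\}$ together with the positive dilations of its slice at $l=1$, giving $F \cap \{l=1\} = D$ and $F_i \cap \{l=1\} = \conv(D \cup D_i)$. It therefore suffices to prove
\[
\conv(D \cup D_1) \cap \conv(D \cup D_2) \subseteq D.
\]

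The core step, which I work out in the $(x,y)$-plane, is the asymmetric inclusion
\[
\conv(D \cup D_2) \cap \{x \geq 0\} \subseteq D.
\]
To establish it, I would take $p = (p_x, p_y)$ in the left-hand side and assume toward a contradiction that $p \notin D$. Let $q = (q_x, q_y)$ be the metric projection of $p$ onto the disk $D$; since $D$ has center $(0,1)$ and radius $1$, the projection formula gives $q_x = p_x/\norm{p-(0,1)}$, so $q_x \geq 0$. In the main case $p_x > 0$ we obtain $q_x > 0$, and the supporting tangent half-plane to $D$ at $q$ consists of all $(x,y)$ satisfying $q_x x + (q_y - 1) y \leq q_y$. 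A short case split on the sign of $q_y - 1$ then shows that every $(x,y) \in D_2$, which satisfies $x \leq 0$ and $0 \leq y \leq 1$, lies in this half-plane: when $q_y \leq 1$ both terms on the left are non-positive, and when $q_y > 1$ one has $q_x x + (q_y - 1) y \leq q_y - 1 < q_y$. This yields a hyperplane strictly separating $p$ from $\conv(D \cup D_2)$, contradicting $p \in \conv(D \cup D_2)$. The degenerate subcase $p_x = 0$ is handled by the bound $\conv(D \cup D_2) \subseteq \{0 \leq y \leq 2\}$, since the segment $\{0\} \times [0,2]$ is already contained in $D$.

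By the reflection $x \mapsto -x$, which exchanges $D_1$ and $D_2$ and fixes $D$, the mirror inclusion $\conv(D \cup D_1) \cap \{x \leq 0\} \subseteq D$ follows symmetrically. Combining the two: any $p \in \conv(D \cup D_1) \cap \conv(D \cup D_2)$ with $p_x \geq 0$ lies in $D$ by the first inclusion, and any such $p$ with $p_x \leq 0$ lies in $D$ by the mirror one, proving $F_1 \cap F_2 \subseteq F$. The one place where care is needed is the verification that the tangent half-plane at $q$ with $q_x > 0$ genuinely contains $D_2$; this is where the sign constraints $x \leq 0$ and $0 \leq y \leq 1$ built into the definition of $D_2$ are essential.
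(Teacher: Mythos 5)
Your proof is correct, and its overall skeleton matches the paper's: slice at $\{l=1\}$, reduce the claim to $\conv(D\cup D_1)\cap\conv(D\cup D_2)\subseteq D$, and treat the two regions $\{x\ge 0\}$ and $\{x\le 0\}$ via the reflection $x\mapsto -x$, using only the sign constraints and the bound $0\le y\le 1$ on $D_1,D_2$ (the cubic inequality plays no role). Where you genuinely diverge is in the proof of the core planar inclusion. The paper proves $\conv(D\cup D_1)\cap\{x\le 0\}\subseteq D$ by a direct convex-combination argument: writing $p=\lambda q+(1-\lambda)d$ with $q\in\conv(D_1)$ (so $q_x\ge 0$) and $d\in D$, the segment $[d,q]$ must cross the chord $\{x=0,\ 0\le y\le 2\}\subset D$ at some $\hat q$, and $p$ then lies on $[d,\hat q]\subseteq D$. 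You instead run a strict-separation argument: assuming $p\notin D$, you take the metric projection $q$ of $p$ onto the disk and verify that the tangent half-plane $q_xx+(q_y-1)y\le q_y$ at $q$ contains $D_2$ (your case split on the sign of $q_y-1$ checks out, using $q_y\in[0,2]$) and hence all of $\conv(D\cup D_2)$. Both are sound; the paper's chord argument is shorter, avoids the projection formula, and needs no degenerate subcase at $p_x=0$, while yours is more systematic and exhibits explicitly the supporting hyperplane of $D$ that certifies the exclusion. One small point you should make explicit: the contradiction requires that $p$ \emph{strictly} violates the tangent inequality, which you assert but do not verify; it follows from $p-q=(\norm{p-(0,1)}-1)\,(q_x,q_y-1)$ with $\norm{p-(0,1)}>1$, giving $q_xp_x+(q_y-1)p_y=q_y+\norm{p-(0,1)}-1>q_y$.
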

\begin{proof}
First we verify that 
\begin{equation}\label{eq:d}
D = \conv(D_1\cup D) \cap \conv(D_2\cup D)
\end{equation}
holds. The inclusion $\subseteq$ is straightforward, so if we focus on the opposite inclusion.
We note that 
\[
\conv(D_i\cup D) = \bigcup _{\lambda \in [0,1]}(\lambda \conv(D_i) + (1-\lambda )D),
\]
for $i \in \{1,2\}$.
For $p = (1,x,y,0,0) \in \conv(D_1\cup D)$, we claim 
that $x \leq 0$ implies that $p \in D$. 
Indeed, suppose that  $p = \lambda q + (1-\lambda)d$ with $q \in \conv(D_1)$, $d \in D$ and $\lambda \in [0,1]$.
If $\lambda = 0$, then  $p \in D$ and we are done. 
Otherwise, $\lambda \in (0,1]$ and the $x$-coordinate of $d$ must be 
nonpositive since the $x$-coordinate of $q$ is always nonnegative, see \eqref{eq:D1}.
In addition, by \eqref{eq:defD} and \eqref{eq:D1}, the $y$-coordinate of $d$ and $q$  are both contained in $[0,2]$.
This means that the line segment connecting $d$ and $q$ passes through a point $\hat q$ in $\{(1,x,y,0,0)\mid x = 0, 0 \leq y \leq 2\}$, which is a set entirely contained in $D$. 
In particular, $p \in D$ since it is a convex combination of $d$ and $\hat q$.

Similarly, if $p = (1,x,y,0,0) \in \conv(D_2\cup D)$, 
then $x \geq 0$ implies $p \in D$.
Both implications together lead to $\conv(D_1\cup D) \cap \conv(D_2\cup D) \subseteq D$, which shows that \eqref{eq:d} indeed holds.

Finally, taking the conic convex hull, we have $\cone D = F \subseteq F_1 \cap F_2$.
Conversely, let $p = (l,x,y,0,0) \in F_1 \cap F_2$, with $l > 0$. 
Then, $p/l = (1,x/l,y/l,0,0) \in \conv(D_1\cup D) \cap \conv(D_2\cup D) = D$. Therefore, $p \in \cone D = F$.
\end{proof}

	\subsection{Proof of Theorem~\ref{thm:main}}\label{sec:proof}
	Our goal is to construct two closed convex projectionally exposed cones $ \tilde \stdCone_1, \tilde \stdCone_2\subset \R^5$ such that their intersection $ \tilde \stdCone_1\cap  \tilde \stdCone_2$ has a face $F$ that is not projectionally exposed as a face of this intersection.  Therefore $\tilde \stdCone_1\cap \tilde \stdCone_2$  is not projectionally exposed. This proves Theorem~\ref{thm:main}. We achieve this by modifying the earlier construction using the approximation and sandwich results of Lemmas~\ref{lem:DclosebdC} and \ref{lem:refinement}.

	Let $\stdCone$ be defined by \eqref{eq:defK}. Then  
	\begin{align*}
		C & \coloneqq  \{(l,x,y,z,s)\in \stdCone \mid l=1\} \\
		& = \conv ( D\cup \alpha([0,1])\cup\beta([0,1])\cup\gamma([0,1]))
	\end{align*}
	is a compact convex slice of the cone $\stdCone$, and in particular $\stdCone = \cone C$ (here $\alpha$,$\beta$, $\gamma$ are defined by \eqref{eq:abg}, and $D$ is defined by $\eqref{eq:defD}$). 
	The disk $D$ is a proper exposed face of $C$ for the same reason that its conic convex hull $F = \cone C$ is a proper exposed face of $\stdCone$ (the last coordinate of all points in $D$ is zero, and the last coordinate of all points on the three curves is positive, except for the points that belong to $D$). The function 
	\[
	\varphi(l,x,y,z,s) = s
	\]
	satisfies the assumptions of Lemma~\ref{lem:DclosebdC} for the set $C$ and its face $D$. Hence there exists a compact convex set $C'$ such that $C'\subseteq C$, $\partial C\cap C' = D$, and $\dist (x,C') \leq \varphi(x)$ for any $x\in C$.
	
	More specifically, since $\alpha[0,1]$, $\beta[0,1]$ and $\gamma[0,1]$ are in $C$, there exist some $\alpha',\beta',\gamma':[0,1]\to C'$ such that 
	\begin{align*}
		\alpha'(t) & \coloneqq \alpha(t) + \delta^\alpha(t) = \alpha(t) + (0, \delta^\alpha_x(t), \delta^\alpha_y(t), \delta^\alpha_z(t), \delta^\alpha_s(t)),\\
		\beta'(t) & \coloneqq  \beta(t) + \delta^\beta(t) = \beta(t) + (0, \delta^\beta_x(t), \delta^\beta_y(t), \delta^\beta_z(t), \delta^\beta_s(t)),\\
		\gamma'(t) & \coloneqq \gamma(t) + \delta^\gamma(t) = \gamma(t) + (0, \delta^\gamma_x(t), \delta^\gamma_y(t), \delta^\gamma_z(t), \delta^\gamma_s(t)),
	\end{align*}
	and $\|\delta^\alpha(t)\|$, $\|\delta^\beta(t)\|$, $\|\delta^\gamma(t)\|\leq t^3$.
	It follows from Proposition~\ref{prop:not-pexposed} that there is no idempotent linear map $P:\R^5\rightarrow \R^5$ such that $P(F) = F$, $P(\R^5) \subseteq \lspan F$,  
	$P(\alpha'[0,1]) \subseteq F$ for all $t\in [0,1]$,
		$P(\beta'[0,1]) \subseteq F$ for all $t\in [0,1]$, and
	$P(\gamma'[0,1]\subseteq F$ for all $t\in [0,1]$. 
	Therefore, the face $F = \cone D$ is not a projectionally exposed face of the modified cone $\stdCone' \coloneqq \cone C'$.
%
%

	We now define 
	\[
	C'_1 \coloneqq \conv (C'\cup D_1), \quad C'_2 \coloneqq \conv (C'\cup D_2),
	\]
	and 
	\[
	C_1 \coloneqq \conv (C\cup D_1), \quad C_2 \coloneqq \conv (C\cup D_2),
	\]
	where $D_1$ and $D_2$ are defined by \eqref{eq:D1} and \eqref{eq:D2}. In other words, $C_1$ and $C_2$ are compact bases of the cones $\stdCone_1$ and $\stdCone_2$ defined in~\eqref{eq:Kidef}, and the sets $C_1'$ and $C_2'$ are compact bases of some cones $\stdCone_1'$ and $\stdCone_2'$ that are subsets of $\stdCone_1$ and $\stdCone_2$ that share the faces $F_1 = \cone (D_1\cup D)$ and $F_2 = \cone( D_2 \cup D)$, respectively. 
	
	\begin{figure}
		\centering
		\begin{overpic}[width=0.75\textwidth
			]{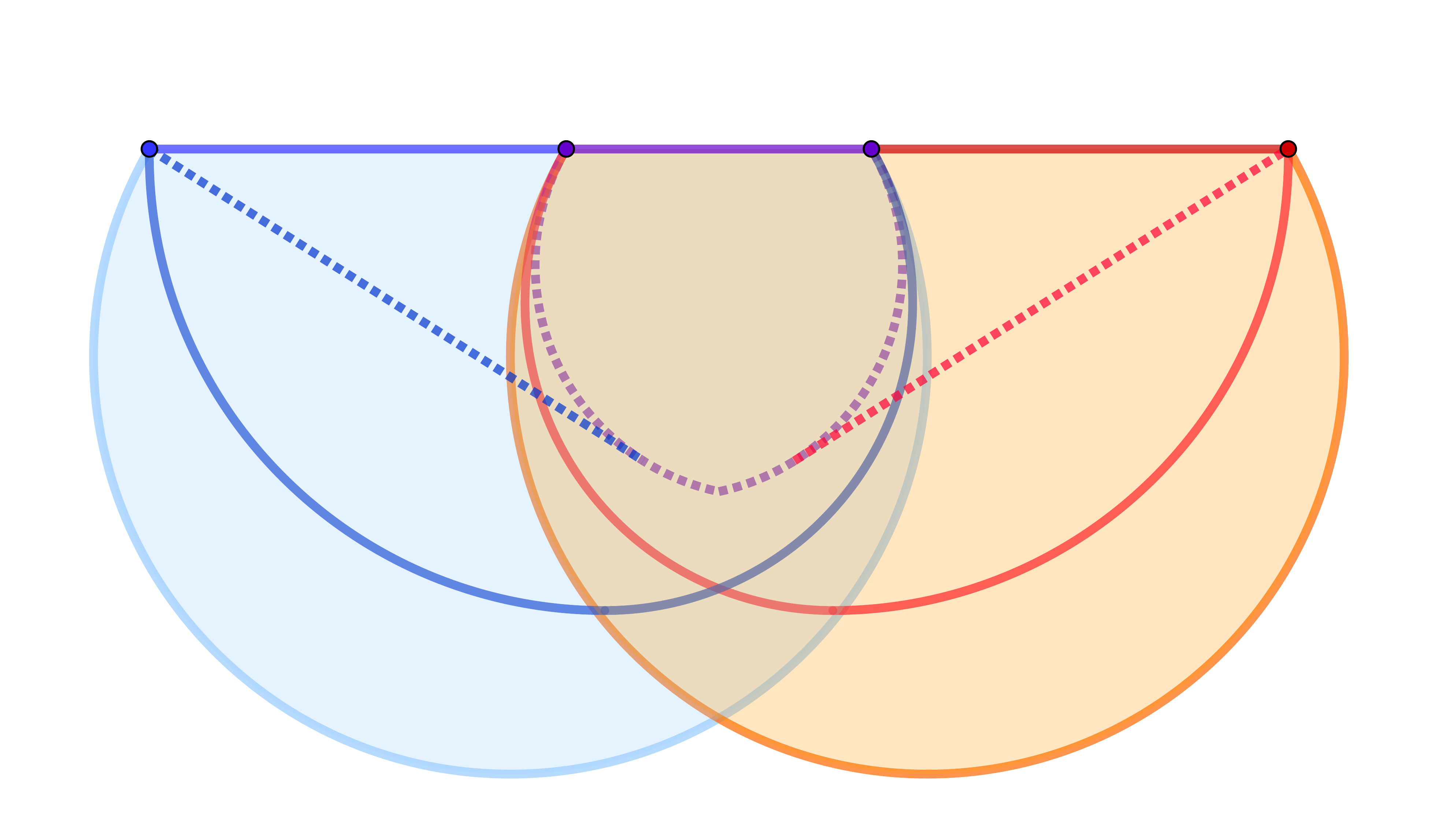} 
			\put(48,50){$F$}    
			\put(30,50){$F_1$}    
			\put(67,50){$F_2$}    
			\put(20,12){$\stdCone_1$}    
			\put(75,12){$\stdCone_2$}    
			\put(47,35){$\stdCone'$}    
			\put(27,41){$\stdCone'_1$}    
			\put(70,41){$\stdCone'_2$}
			\put(25,25){$\tilde \stdCone_1$}    
			\put(70,25){$\tilde \stdCone_2$}   
		\end{overpic}
		\caption{A diagram showing the construction used in proof of Theorem~\ref{thm:main}}
		\label{fig:illproof}
	\end{figure}
	
	We observe that $C_i$ and $C_i'$ together with the face $\conv (D\cup D_i)$ satisfy the conditions of  Lemma~\ref{lem:refinement} for $i\in \{1,2\}$. 
	Hence we can sandwich some compact convex set  $E_i$ between $C_i'$ and $C_i$ such that the faces of $E_i$ are either extreme points or subfaces of the face $\conv (D\cup D_i)$. 
	Let $\tilde \stdCone_i \coloneqq \cone E_i$ for $i\in \{1,2\}$. Since (for $i\in \{1,2\}$) $F_i$ is still a face of $\tilde \stdCone_i$ and $\tilde{\stdCone}_i \subseteq \stdCone_i$, we conclude from Proposition~\ref{prop:KiPe} that $F_i$ must be a  projectionally exposed face of $\tilde{\stdCone}_i$. 
	Moreover all remaining nonzero faces of these cones are projectionally exposed, since they are all extreme rays~\cite[Proposition 2.2]{BLP87}.
	In addition, by Proposition~\ref{prop:f1f2}, $F_1 \cap F_2 = F$ holds so
	$F$ is a face of  $\tilde \stdCone_1\cap \tilde \stdCone_2$.
	
	Overall, we have the following relations for $i \in \{1,2\}$:
	\begin{align*}
	C' \subseteq C \subseteq C_i, & \qquad C' \subseteq C'_i \subseteq E_i \subseteq C_i,\\
	\stdCone' \subseteq \stdCone \subseteq \stdCone_i, & \qquad \stdCone' \subseteq \stdCone_i' \subseteq \tilde \stdCone_i \subseteq \stdCone_i,\\
	F \face \stdCone', & \qquad F_i \face \tilde \stdCone_i
	\end{align*}
	and $\tilde \stdCone_1$, $\tilde \stdCone_2$ are projectionally exposed cones, see also Figure~\ref{fig:illproof}.
		
	It only remains to show that $\tilde \stdCone_1\cap \tilde \stdCone_2$ is not projectionally exposed. 	Assuming the contrary, there exists a projection $P$ that maps $\R^5$ to the linear span of $F$, and such that $P(\tilde \stdCone_1\cap \tilde \stdCone_2)= F$. But then $P(\stdCone')\subseteq P(\tilde \stdCone_1\cap \tilde \stdCone_2)= F$, which means that $P$ projects $\stdCone'$ onto its face $F$, contradicting the fact that $F$ is not a projectionally exposed face of $\stdCone'$. 
	
\section{Conclusions and Some Open Questions}\label{sec:conc}
In this work we developed tools for finding inner approximation of convex sets and this enabled us to prove that there are amenable cones that are not projectionally exposed and that p-exposedness is not necessarily preserved by intersections. 
As usual, answering a question leads to several new ones, so in this final section we take a look at some pending issues.


A first open question is related to the machinery itself.
\begin{quote}
\textbf{Question~1}. Suppose that $C$ belongs to some desirable class of convex sets, e.g., spectrahedral sets or semialgebraic sets.
Is it possible to obtain a version of Theorem~\ref{thm:amalgamation} (or of the other results in Section~\ref{sec:tools}) that ensures that $D$ also belongs to the same class?
\end{quote}
An arbitrary union of semialgebraic sets is not semialgebraic in general, so the $U$ in the proof of Lemma~\ref{lem:DclosebdC} (and, as consequence, the $C'$) may not be semialgebraic even if $C$ is.
This suggests that answering Question~1, if possible at all, may require a new set of techniques.

Next, it would be interesting to explore the class of p-exposed cones.
Spectrahedral cones are known to be amenable and a few of them are known to be projectionally exposed, such as polyhedral cones and symmetric cones. 
This leads to the following natural question.
\begin{quote}
	\textbf{Question~2}. Are spectrahedral cones p-exposed?
\end{quote}
We note that our discussion in Section~\ref{sec:counterexample}
does not answer this question. 
The cones $\stdCone_1$ and $\stdCone_2$ do not appear to be spectrahedral. 
However, this does not exclude the possibility that $\stdCone = \stdCone_1 \cap \stdCone_2$ is spectrahedral. 
Similarly, it would be interesting to explore whether hyperbolicity cones are p-exposed.

Finally, although p-exposedness is not preserved by intersections in general, it may be interesting to look at sufficient conditions for that.
\begin{quote}
	\textbf{Question~3}. Are there reasonable conditions ensuring that the intersection of p-exposed cones $\stdCone_1, \stdCone_2$ is p-exposed? 
\end{quote}
An important case is when one of the cones is a subspace. This is related to Question~2, since spectrahedral cones are positive semidefinite (PSD) cones sliced by a subspace and PSD cones are p-exposed.

In fact, all symmetric cones (in particular, PSD cones) satisfy a stronger property called \emph{orthogonal projectional exposedness} \cite[Proposition~33]{L19}, where each projection onto a face can be taken to be orthogonal.
Related to that, in \cite[Proposition~4.18]{GL23}, there is a discussion on a condition that ensures that a slice of a cone is {orthogonally} projectionally exposed. 
Unfortunately, this condition is far too strong to be generally applicable and it would be helpful to obtain sufficient conditions for p-exposedness under weaker assumptions.

All those three questions are somewhat interconnected. If Question~1 has a positive resolution for, say, spectrahedral cones, this would be useful in constructing a counter-example for Question~2 (if one exists).
Answering Question~3 would also be useful to understand which spectrahedral cones are p-exposed.

\section*{Acknowledgements}
Bruno F. Louren\c{c}o is supported partly by the JSPS Grant-in-Aid for Early-Career Scientists 23K16844.

James Saunderson is the recipient of an Australian Research Council Discovery Early Career Researcher Award 
(project number DE210101056) funded by the Australian Government.

%
%

%
%
	
	\bibliographystyle{plain}
	\bibliography{refs}

\end{document}